\newcommand{\argmin}{\operatorname{argmin}}
\newcommand{\Rr}{{\mathbb{R}}}
\newcommand{\Nn}{{\mathbb{N}}}
\newcommand{\Tt}{{\mathbb{T}}}
\newcommand{\Hh}{{\overline{H}}}
\newcommand{\bx}{\boldsymbol{x}}
\newcommand{\by}{\boldsymbol{y}}
\newcommand{\ba}{\boldsymbol{a}}
\newcommand{\bb}{\boldsymbol{b}}
\newcommand{\bxi}{\boldsymbol{\xi}}
\newcommand{\bita}{\boldsymbol{\eta}}
\newcommand{\bc}{\boldsymbol{c}}
\newcommand{\bd}{\boldsymbol{d}}
\def\leq{\leqslant}
\def\geq{\geqslant}
\numberwithin{equation}{section}
\newtheoremstyle{thmlemcorr}{10pt}{10pt}{\itshape}{}{\bfseries}{.}{10pt}{{\thmname{#1}\thmnumber{
                        #2}\thmnote{ (#3)}}}
\newtheoremstyle{thmlemcorr*}{10pt}{10pt}{\itshape}{}{\bfseries}{.}\newline{{\thmname{#1}\thmnumber{
\newtheoremstyle{defi}{10pt}{10pt}{\itshape}{}{\bfseries}{.}{10pt}{{\thmname{#1}\thmnumber{
                        #2}\thmnote{ (#3)}}}
\newtheoremstyle{remexample}{10pt}{10pt}{}{}{\bfseries}{.}{10pt}{{\thmname{#1}\thmnumber{
                        #2}\thmnote{ (#3)}}}
\newtheoremstyle{ass}{10pt}{10pt}{}{}{\bfseries}{.}{10pt}{{\thmname{#1}\thmnumber{
                        A#2}\thmnote{ (#3)}}}
\theoremstyle{thmlemcorr}
\newtheorem{theorem}{Theorem}
\numberwithin{theorem}{section}
\newtheorem{lemma}[theorem]{Lemma}
\newtheorem{corollary}[theorem]{Corollary}
\newtheorem{proposition}[theorem]{Proposition}
\theoremstyle{thmlemcorr*}
\newtheorem{theorem*}{Theorem}
\newtheorem{lemma*}[theorem]{Lemma}
\newtheorem{corollary*}[theorem]{Corollary}
\newtheorem{proposition*}[theorem]{Proposition}
\newtheorem{problem*}[theorem]{Problem}
\newtheorem{conjecture*}[theorem]{Conjecture}
\theoremstyle{defi}
\theoremstyle{remexample}
\newtheorem{remark}[theorem]{Remark}
\theoremstyle{ass}
\begin{document}

\title[The short title]{One-dimensional, non-local, first-order, stationary mean-field games with congestion: a Fourier approach}

\author{Levon Nurbekyan}
\address[L. Nurbekyan]{
	King Abdullah University of Science and Technology (KAUST), CEMSE Division, Thuwal 23955-6900, Saudi Arabia. }
\email{levon.nurbekyan@kaust.edu.sa}

\keywords{Non-local mean-field games; Fourier expansions}
\subjclass[2010]{35Q91, 35Q93, 35A01}

\thanks{
	L. Nurbekyan was partially supported by KAUST baseline and start-up funds and 
	KAUST SRI, Uncertainty Quantification Center in Computational Science and Engineering.
}
\date{\today}

\begin{abstract}
Here, we study a one-dimensional, non-local mean-field game model with congestion. When the kernel in the non-local coupling is a trigonometric polynomial we reduce the problem to a finite dimensional system. Furthermore, we treat the general case by approximating the kernel with trigonometric polynomials. Our technique is based on Fourier expansion methods.
\end{abstract}

\maketitle

\tableofcontents

\section{Introduction}

In this paper, we consider the following mean-field game (MFG) model
\begin{equation}\label{eq: main}
	\begin{cases}
	\frac{(u_x+c)^2}{2m^{2-\alpha}}+V(x)=\int\limits_{\Tt} G(x-y)m(y)dy+\Hh,\\
	(m^{\alpha-1}(u_x+c))_x=0,\\
	m>0,\ \int\limits_{\Tt} m(x)dx=1,
	\end{cases}
\end{equation}
where, $G\in C^2(\Tt)$ is a given kernel, $V\in C^2(\Tt)$ is a given $C^2$ potential and $0< \alpha\leq 2,\ c \in \Rr$ are given parameters. The unknowns are functions $u,m:\Tt \to \Rr$ and the number $\Hh \in \Rr$. We study the existence of smooth solutions for \eqref{eq: main} and analyze their properties and solution methods.  

MFGs theory was introduced by J-M. Lasry and P-L. Lions in \cite{ll1,ll2,ll3,ll4} and by M. Huang, P. Caines and R. Malham\'{e} in \cite{Caines2, Caines1} to study large populations of agents that play dynamic differential games. Mathematically, MFGs are given by the following system
\begin{equation}\label{eq: mfg}
\begin{cases}
-u_t(x,t)-\sigma^2 \Delta_x u(x,t)+H(x,D_xu(x,t),m(x,t),t)=0,\\
m_t(x,t)-\sigma^2 \Delta_x m(x,t)-\text{div}_x\left(D_p H(x,D_xu(x,t),m(x,t),t)\right)=0,\\
m(x,0)=m^0(x),\quad u(x,T)=u^T(x,m^0(x)). 
\end{cases}
\end{equation}
where $m(x,t)$ is the distribution of the population at time $t$, and $u(x,t)$ is the value function of the individual player, and $T$ is the terminal time. Furthermore, $H:\Tt^d \times \Rr^d \times X \times \Rr \to \Rr,\ (x,p,m,t)\mapsto H(x,p,m,t)$ is the Hamiltonian of the system, where $X=\Rr^+$ or $L^1(\Tt^d;\Rr^+)$ or $\Rr^+ \times L^1(\Tt^d;\Rr^+)$, and $\sigma\geq0$ is the diffusion parameter. Finally, $m^0,\ u^T$ are given initial-terminal conditions.

Suppose $L:\Tt^d \times \Rr^d \times X \times \Rr \to \Rr,\ (x,v,m,t)\mapsto L(x,v,m,t)$ is the Legendre transform of $H$. Then, formally, \eqref{eq: mfg} are the optimality conditions for a population of agents where each agent aims to minimize the action
\begin{equation}\label{eq: control problem}
u(x,t)=\inf\limits_{v}\{\mathbb{E}\int\limits_{t}^{T} L\big(x(s),v(s),m(x(s),s),s\big)ds+u^T\big(x(T),m(x(T),T)\big)\},
\end{equation}
where the infimum is taken over all progressively measurable controls $v(s)$, and trajectories $x(s)$ are governed by
\begin{equation*}
	dx(s)=v(s)ds+\sqrt{2}\sigma dW_s,\quad x(0)=x,
\end{equation*}
for a standard $d$-dimensional Brownian motion $\{W_s\}$. Assume that are driven by mutually independent Brownian motions.

Indeed, the first equation in \eqref{eq: mfg} is the Hamilton-Jacobi equation for the value function $u$. Furthermore, optimal velocities of agents are given by
\begin{equation*}
	v(t)=-D_p H(x,D_xu(x(t),t),m(x(t),t)),
\end{equation*}
thus the second equation in \eqref{eq: mfg} which is the corresponding Fokker-Planck equation. Rigorous derivations of \eqref{eq: mfg} in various contexts can be found in \cite{ll1,ll2,ll3,ll4,llg2,Carda,GS} and references therein.

Actions of the total population affect an individual agent through the dependence of $H$ and $L$ on $m$. The type of the dependence of $H$ and $L$ on $m$ is called the \textit{coupling}, and it can be either local, global or mixed. Spatial preferences of agents are encoded in the $x$ dependence of $H$ and $L$. 

Our problem of interest \eqref{eq: main} is the 1-dimensional, stationary, first-order version of \eqref{eq: mfg} with Hamiltonian
\begin{equation}\label{eq: Hamiltonian}
	H(x,p,m,t)=\frac{(p+c)^2}{2m^{2-\alpha}}+V(x)-\int\limits_{\Tt} G(x-y)m(y)dy.
\end{equation}

Since seminal papers \cite{ll1,ll2,ll3,ll4,Caines2,Caines1} a substantial amount of research has been done in MFGs. Classical solutions were studied extensively both in stationary and non-stationary settings in \cite{ll3, GPM1, GM, PV15} and in \cite{GPim1, GPim2, GPM2, GPM3}, respectively. Weak solutions were addressed in \cite{porretta, porretta2, Cd2, cgbt} for time-dependent problems and in \cite{FG2} for stationary problems. Numerical methods can be found in \cite{AP, achdou2013finite, achdouperez'12, CDY, DY, carlinisilva'14, camillisilva'12, gueant'12, AFG}.

Nevertheless, most of the previous work concerns problems where Hamiltonian does not have singularity at $m=0$. The problems where Hamiltonian has singularity at $m=0$, such as in \eqref{eq: Hamiltonian}, are called \textit{congestion} problems. The reason is that the Lagrangian corresponding to $H$ in \eqref{eq: Hamiltonian} is
\begin{equation*}
	L(x,v,m,t)=\frac{v^2m^{2-\alpha}}{2}+c v-V(x)+\int\limits_{\Tt}G(x-y)m(y)dy,
\end{equation*}
and in the view of \eqref{eq: control problem} agents pay high price for moving at high speeds in dense areas.

Congestion problems were previously studied in \cite{LCDF, GMit, GVrt2, Graber2, GNP2, Gueant2, meszaros'15, meszaros'16}. Uniqueness of smooth solutions was established in \cite{LCDF}. Existence of smooth solutions for stationary second-order local MFG with quadratic Hamiltonian was established in \cite{GMit}. Short-time existence and uniqueness of smooth and weak solutions for time-dependent second-order local MFGs were addressed in \cite{GVrt2} and \cite{Graber2}, respectively. Analysis of stationary first-order local MFGs in 1-dimensional setting is performed in \cite{GNP2}. Problems on graphs are considered in \cite{Gueant2}. MFG models with density constraints (hard congestion) and local coupling are addressed in \cite{meszaros'15} (second-order case) and \cite{meszaros'16} (first-order case). To our knowledge, existence of smooth solutions for stationary first-order MFGs with global coupling has not been studied before.

One of the main tools of analysis in MFGs theory is the method of a priori estimates. See \cite{GPV, Carda} and references therein for a detailed account on a priori-estimates methods in MFGs.

Here, we take a different route. Firstly, using the 1-dimensional structure of the problem, we reduce it to an equation with only $m$ and $\Hh$ as unknowns. Indeed, from the second equation in \eqref{eq: main} we have that
\begin{equation}\label{eq: reduction}
u_x(x)+c=\frac{j}{m(x)^{\alpha-1}},\quad \forall x\in \Tt,
\end{equation}
where $j$ is some constant that we call \textit{current}. Therefore, \eqref{eq: main} can be written in an equivalent form
\begin{equation}\label{eq: main current}
\begin{cases}
\frac{j^2}{2m^{\alpha}(x)}+V(x)=\int\limits_{\Tt} G(x-y)m(y)dy+\Hh,\ x\in\Tt\\
m>0,\ \int\limits_{\Tt} m(x)dx=1.
\end{cases}
\end{equation}
\begin{remark}
From here on, we do not differentiate between \eqref{eq: main} and \eqref{eq: main current}. Moreover, we refer to \eqref{eq: main current} as the original problem.
\end{remark}
\begin{remark}
Note that $c$, as a solution parameter, in \eqref{eq: main} is replaced by $j$ in \eqref{eq: main current}. We discuss the relation between $c$ and $j$ in Section \ref{sec:current}. 
\end{remark}
Following \cite{GNP}, \cite{GNP2} we call \eqref{eq: main current} the \textit{current formulation} of \eqref{eq: main}. There are two possibilities: $j\neq 0$ and $j=0$. We study the simpler case $j=0$ only in Section \ref{sec:current} and focus on the case $j\neq 0$ afterwards.

Our main observation is that when $G$ is a trigonometric polynomial solutions of \eqref{eq: main current} have a certain structure in terms of unknown Fourier coefficients that satisfy a related equation.

More precisely, for $j\neq 0$ denote by $c_j=(j^2/2)^{\frac{1}{\alpha}}$. Furthermore, for $0<\alpha\leq 2$ denote by $\phi_{\alpha}:(0,+\infty)\to \Rr$ the antiderivative of $x\mapsto \frac{c_j}{x^{\frac{1}{\alpha}}}$; that is,
\begin{equation*}
\phi_{\alpha}(x)=\begin{cases}\frac{c_j \alpha}{\alpha-1}x^{\frac{\alpha-1}{\alpha}},\quad \text{if}\quad \alpha \neq 1,\\
c_j \ln x,\quad \text{if}\quad \alpha = 1.
\end{cases}
\end{equation*}
Next, let $\mathcal{C}$ be the set of all points $(a_0,\cdots,a_n,b_1,\cdots,b_n) \in \Rr^{2n+1}$ such that
\begin{align}\label{eq: C}
a_0+\sum\limits_{k=1}^{n}a_k\cos(2\pi k x)+b_k\sin(2\pi k x)-V(x)>0,\ \text{for all}\ x\in\Tt.
\end{align}
Finally, for $(a_0,\cdots,a_n,b_1,\cdots,b_n) \in \mathcal{C}$ define
\begin{align}\label{eq: upperphi}
&\Phi_{\alpha}(a_0,a_1,\cdots,a_n,b_1,\cdots,b_n)=\\ \notag
&\int\limits_{\Tt} \phi_{\alpha}\left(a_0+\sum\limits_{k=1}^{n}a_k\cos(2\pi k y)+b_k\sin(2\pi k y)-V(y)\right)dy.
\end{align}
Then, we prove the following theorem.
\begin{theorem}\label{thm: free for trig}
Suppose that $G$ is a trigonometric polynomial; that is, 
\begin{equation}\label{eq: G_rep}
G(x)=p_0+\sum\limits_{k=1}^{n}p_k \cos(2\pi k x) + \sum\limits_{k=1}^{n}q_k \sin(2\pi k x)
\end{equation}
for some $n\in\Nn$ and $p_0,p_1,\cdots,p_n,q_1,\cdots,q_n \in \Rr$. Then, if $G$ satisfies \eqref{eq: G_mon} and \eqref{eq: G weight} the system \eqref{eq: main current} has a unique smooth solution.

Moreover, the solution $(m,\Hh)$ of \eqref{eq: main current} is given by formulas
\begin{equation}\label{eq: ansatz m}
m(x)=\frac{c_j}{\left(a_0^*+\sum\limits_{k=1}^{n}a_k^*\cos(2\pi k x)+b_k^*\sin(2\pi k x)-V(x)\right)^{\frac{1}{\alpha}}},
\end{equation}
and
\begin{equation}\label{eq: H=a0-p0}
\Hh=a_0^*-p_0,
\end{equation}
where $(a_0^*,a_1^*,\cdots,a_n^*,b_1^*,\cdots,b_n^*)$ is the unique solution of the system
\begin{align}\label{eq: system main for original}
\begin{cases}
\frac{\partial \Phi_{\alpha}}{\partial a_0}&=1,\\
\frac{\partial \Phi_{\alpha}}{\partial a_k}&=\frac{p_k}{p_k^2+q_k^2}a_k+\frac{q_k}{p_k^2+q_k^2}b_k\\
\frac{\partial \Phi_{\alpha}}{\partial b_k}&=\frac{p_k}{p_k^2+q_k^2}b_k-\frac{q_k}{p_k^2+q_k^2}a_k,\ 1\leq k \leq n,
\end{cases}
\end{align}
where $\Phi_{\alpha}$ is given by \eqref{eq: upperphi}.
\end{theorem}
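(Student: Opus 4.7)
The plan is to reduce the continuous problem \eqref{eq: main current} to the finite-dimensional algebraic system \eqref{eq: system main for original} via a Fourier expansion, and then settle existence and uniqueness at the finite-dimensional level. First, using the addition formulas $\cos(2\pi k(x-y))=\cos(2\pi kx)\cos(2\pi ky)+\sin(2\pi kx)\sin(2\pi ky)$ and the analogous identity for $\sin(2\pi k(x-y))$, the convolution $\int_{\Tt}G(x-y)m(y)\,dy$ becomes a trigonometric polynomial of degree at most $n$ in $x$, with constant term $p_0$ and $k$-th cosine and sine coefficients $p_kC_k-q_kS_k$ and $p_kS_k+q_kC_k$, where $C_k:=\int_{\Tt}\cos(2\pi ky)m(y)\,dy$ and $S_k:=\int_{\Tt}\sin(2\pi ky)m(y)\,dy$ are the Fourier coefficients of $m$. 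Solving the first equation of \eqref{eq: main current} algebraically for $m$ then produces the ansatz \eqref{eq: ansatz m}, provided one sets $a_0=\Hh+p_0$, $a_k=p_kC_k-q_kS_k$, and $b_k=p_kS_k+q_kC_k$; the positivity $m>0$ translates exactly into $(a_0,a_1,\dots,b_n)\in\mathcal{C}$.

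Next, I would close the system by reading off the Fourier coefficients of the ansatz. Since $\phi_{\alpha}'(x)=c_j x^{-1/\alpha}$, differentiating \eqref{eq: upperphi} under the integral sign with $m$ given by \eqref{eq: ansatz m} yields $\partial_{a_0}\Phi_{\alpha}=\int_{\Tt}m\,dx$, $\partial_{a_k}\Phi_{\alpha}=C_k$, and $\partial_{b_k}\Phi_{\alpha}=S_k$. The normalization $\int m=1$ is therefore the first equation of \eqref{eq: system main for original}. The $2\times 2$ linear relation $a_k=p_kC_k-q_kS_k$, $b_k=p_kS_k+q_kC_k$ has determinant $p_k^2+q_k^2$ and inverts to express $C_k,S_k$ as the linear forms in $(a_k,b_k)$ appearing on the right-hand side of \eqref{eq: system main for original}. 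Combined with the Fourier identities above, this produces the remaining $2n$ equations and gives formula \eqref{eq: H=a0-p0}. Hence smooth solutions $(m,\Hh)$ of \eqref{eq: main current} are in bijection with solutions $(a_0^*,\dots,b_n^*)\in\mathcal{C}$ of \eqref{eq: system main for original}.

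It remains to establish existence and uniqueness of \eqref{eq: system main for original}, which is the heart of the argument. A direct variational approach fails, because the right-hand side of \eqref{eq: system main for original} is not a gradient field in $(a_k,b_k)$ when $q_k\neq 0$: the mixed partials differ by a sign. For uniqueness I would subtract the equations for two putative solutions $(a,b),(a',b')$, pair with the difference $(a-a',b-b')$, and argue that the concavity of $\Phi_{\alpha}$ (inherited from the concavity of $\phi_{\alpha}$) contributes a non-positive term on one side, while the monotonicity condition \eqref{eq: G_mon} — which should encode that $m\mapsto G*m$ is Lasry--Lions monotone, equivalently that the relevant coefficients $p_k$ are non-negative — contributes a non-negative term on the other, forcing equality. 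For existence I would employ a continuation/topological-degree argument, starting from a trivial case such as $V\equiv 0$ and $G\equiv p_0$ (whose only solution is $m\equiv 1$, corresponding to $a_0=p_0$ and $a_k=b_k=0$) and deforming to the given data, while using the weight bound \eqref{eq: G weight} to keep candidate solutions in a fixed compact subset of $\mathcal{C}$. The main obstacle will be the a priori estimates preventing $(a_0,\dots,b_n)$ from approaching $\partial\mathcal{C}$ along the homotopy — i.e.\ preventing $m$ from degenerating to zero — which is where \eqref{eq: G_mon}, \eqref{eq: G weight}, and the regularity of $V$ must be combined delicately.
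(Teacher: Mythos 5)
Your reduction to the finite-dimensional system and your uniqueness argument match the paper's. The equivalence step (Proposition~\ref{thm:equiv}) is exactly the Fourier matching you describe, and your uniqueness argument — pair the difference of two solutions with $\nabla\Phi_{\alpha}$, get a non-positive term from the strict concavity of $\Phi_{\alpha}$ and a non-negative term $\sum_k\frac{p_k}{p_k^2+q_k^2}\bigl((\Delta a_k)^2+(\Delta b_k)^2\bigr)$ from the equations and $p_k\geq 0$ — is precisely Proposition~\ref{prp: system reduced uniqueness}. Your observation that a direct variational formulation fails when some $q_k\neq 0$ is also on point; it is the reason the symmetric case (Corollary~\ref{crl:sym_case}) gets a cleaner statement.

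Where you diverge is the existence step, and there your proposal has a real gap. You suggest a continuation or degree argument, starting from $V\equiv 0$, $G\equiv p_0$ and deforming, but you only name the central obstacle — a priori estimates that keep the candidate $(a_0,\ldots,b_n)$ away from $\partial\mathcal{C}$ and bounded along the homotopy — without providing them. Those estimates are precisely the hard content, and the restriction $0<\alpha\leq 2$ must enter there; as stated, this existence argument is a plan rather than a proof. The paper takes a different, self-contained route. It first proves (Lemma~\ref{lma:omega}) that for each $(\ba',\bb)\in\Rr^{2n}$ the normalization $\partial_{a_0}\Phi_{\alpha}=1$ has a unique solution $a_0=\omega(\ba',\bb)$, using the blow-up $\partial_{a_0}\Phi_{\alpha}\to\infty$ as $a_0$ approaches the boundary value — this is exactly where $0<\alpha\leq 2$ is used, and it plays the role of the boundary estimate you would need. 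It then defines the non-negative least-squares residual $F(\ba',\bb)$ of the remaining $2n$ equations along the slice $a_0=\omega(\ba',\bb)$, shows $F$ is coercive (the bound $|\partial_{a_k}\Phi_{\alpha}|\leq\partial_{a_0}\Phi_{\alpha}=1$ gives coercivity essentially for free), so a minimizer exists. The crux is Proposition~\ref{prp: existence reduced system}: at a critical point of $F$, a careful computation — using the implicit-function formulas for $\partial\omega/\partial a_l$, $\partial\omega/\partial b_l$ and the negative-definiteness of $D^2\Phi_{\alpha}$ from \eqref{eq:Hessianform} — forces the residual vector to vanish, so the minimizer actually solves \eqref{eq: system main for original}. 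This sidesteps degree theory entirely and keeps the a priori analysis localized to the one scalar normalization constraint. If you want to pursue your degree-theoretic route, you will need to reproduce something equivalent to Lemma~\ref{lma:omega} to control $a_0$, plus a coercivity estimate in $(\ba',\bb)$ analogous to the one used for $F$; until those are supplied, the existence half of your argument is incomplete.
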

\begin{remark}
Assumptions \eqref{eq: G_mon}, \eqref{eq: G weight} are natural monotonicity assumptions for the coupling $\int\limits_{\Tt}G(x-y)m(y)dy$, and we discuss them in Section \ref{sec:assum}. When $G$ has the form \eqref{eq: G_rep} these assumptions are equivalent to $p_k \geq 0,\ 0\leq k\leq n$ and $p_0>0$, respectively (see Section \ref{sec:trig}).
\end{remark}
Theorem \ref{thm: free for trig} reduces the a priori-infinite-dimensional problem \eqref{eq: main current} to a finite dimensional problem \eqref{eq: system main for original} when the kernel is a trigonometric polynomial. Also, $\Phi_{\alpha}$ is concave, so \eqref{eq: system main for original} corresponds to finding a root of a monotone mapping which is advantageous from the numerical perspective. This reduction is even more substantial, when the kernel $G$ is a symmetrical trigonometric polynomial; that is, $q_k=0$ for $1\leq k\leq n$. In the latter case, \eqref{eq: system main for original} is equivalent to a concave optimization problem. More precisely, we obtain the following corollary.
\begin{corollary}\label{crl:sym_case}
Suppose that $G$ is a symmetrical trigonometric polynomial; that is, 
\begin{equation}\label{eq: G_rep sym}
G(x)=\sum\limits_{k=0}^{n}p_k \cos(2\pi k x)
\end{equation}
for some $n\in \Nn$ and $p_0,p_1,\cdots,p_n\in \Rr$. Then, if $G$ satisfies \eqref{eq: G_mon} and \eqref{eq: G weight} the system \eqref{eq: main current} has a unique smooth solution.

Moreover, the solution $(m,\Hh)$ of \eqref{eq: main current} is given by formulas \eqref{eq: ansatz m} and \eqref{eq: H=a0-p0} where $(a_0^*,a_1^*,\cdots,a_n^*,b_1^*,\cdots,b_n^*)$ is the unique solution of the optimization problem
\begin{equation}\label{eq: variational}
\max\limits_{(a_0,a_1,\cdots,a_n,b_1,\cdots,b_n)\in \mathcal{C}} \left(\Phi_{\alpha}(a_0,a_1,\cdots,a_n,b_1,\cdots,b_n)-a_0-\sum\limits_{k=1}^{n}\frac{1}{2p_k}\left(a_k^2+b_k^2\right)\right).
\end{equation}
\end{corollary}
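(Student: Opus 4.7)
The plan is to specialize Theorem~\ref{thm: free for trig} to the symmetric situation and then recognize the resulting nonlinear system as the first-order optimality condition of a strictly concave program on the convex set $\mathcal{C}$. Under the symmetry assumption $q_1=\cdots=q_n=0$ the three blocks of equations in \eqref{eq: system main for original} simplify to
\begin{equation*}
\frac{\partial \Phi_{\alpha}}{\partial a_0}=1,\qquad \frac{\partial \Phi_{\alpha}}{\partial a_k}=\frac{a_k}{p_k},\qquad \frac{\partial \Phi_{\alpha}}{\partial b_k}=\frac{b_k}{p_k}\quad (1\le k\le n),
\end{equation*}
which are exactly the equations $\nabla F=0$, where
\begin{equation*}
F(a_0,\ldots,a_n,b_1,\ldots,b_n)=\Phi_{\alpha}(a_0,\ldots,a_n,b_1,\ldots,b_n)-a_0-\sum_{k=1}^n\frac{a_k^2+b_k^2}{2p_k}
\end{equation*}
is the objective in \eqref{eq: variational}. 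So if I can show that $F$ is strictly concave on $\mathcal{C}$ and that the solution produced by Theorem~\ref{thm: free for trig} lies in the interior of $\mathcal{C}$, that solution is automatically the unique global maximizer of \eqref{eq: variational}.

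Convexity of $\mathcal{C}$ is immediate, since the defining condition \eqref{eq: C} is a family of strict affine inequalities in $(a_0,\ldots,a_n,b_1,\ldots,b_n)$ indexed by $x\in\Tt$. For concavity of $\Phi_{\alpha}$, I would note that $\phi_{\alpha}'(z)=c_j z^{-1/\alpha}$ is strictly decreasing on $(0,\infty)$, so $\phi_{\alpha}$ is strictly concave; since the argument of $\phi_{\alpha}$ in \eqref{eq: upperphi} is affine in the parameters for every fixed $y$, composing and integrating preserves concavity. Strict concavity then follows by computing the Hessian
\begin{equation*}
D^2\Phi_{\alpha}=\int_{\Tt}\phi_{\alpha}''(\cdots)\,\xi(y)\xi(y)^\top\,dy,\qquad \xi(y)=(1,\cos(2\pi y),\ldots,\sin(2\pi n y))^\top,
\end{equation*}
and observing that $\phi_{\alpha}''<0$ while the coordinates of $\xi$ are linearly independent in $L^2(\Tt)$; hence $D^2\Phi_{\alpha}$ is negative definite. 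Combined with the strictly concave quadratic penalty $-\sum_k(a_k^2+b_k^2)/(2p_k)$ (the monotonicity assumption \eqref{eq: G weight} together with \eqref{eq: G_mon} gives $p_k>0$; indices with $p_k=0$ contribute nothing to $G$ and can be dropped a priori from the optimization by setting $a_k=b_k=0$), the functional $F$ is strictly concave on the convex set $\mathcal{C}$.

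To conclude, Theorem~\ref{thm: free for trig} guarantees the existence of $(a_0^*,\ldots,a_n^*,b_1^*,\ldots,b_n^*)\in\mathcal{C}$ solving \eqref{eq: system main for original} and hence $\nabla F=0$. By the strict concavity of $F$ on the convex open set $\mathcal{C}$, any interior critical point is the unique global maximizer, so $(a_0^*,\ldots,b_n^*)$ solves \eqref{eq: variational} uniquely, and the representation \eqref{eq: ansatz m}--\eqref{eq: H=a0-p0} is inherited directly from Theorem~\ref{thm: free for trig}. The only substantive step is the strict concavity of $\Phi_\alpha$; all other parts are either direct specialization of the theorem or routine convex-analysis facts.
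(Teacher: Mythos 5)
Your proof is correct and follows essentially the same route as the paper: specialize Theorem \ref{thm: free for trig} to $q_k=0$, observe that the reduced system \eqref{eq: system main for original} is exactly $\nabla F = 0$ for the objective $F$ in \eqref{eq: variational}, and invoke strict concavity of $\Phi_\alpha$ on the convex set $\mathcal{C}$ (which the paper has already established as Lemma \ref{lma:upperphi}(iv), while you re-derive it from the negative-definiteness of the Hessian). The remark about dropping indices with $p_k=0$ matches the paper's own remark following Proposition \ref{thm:equiv}.
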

Additionally, we find closed form solutions in some special cases.
\begin{theorem}\label{prp: alpha=1}
Assume that $\alpha=1$ and $G,\ V$ are first-order trigonometric polynomials; that is
\begin{align*}
G(x)=p_0+p_1\cos(2\pi x)+q_1\sin(2\pi x),\ V(x)=v_0+v_1\cos(2\pi x)+w_1\sin(2\pi x),
\end{align*}
where $p_0,p_1,q_1,v_0,v_1,w_1 \in \Rr$ and $p_0>0,\ p_1\geq 0$. Then, define $a_0,a_1,b_1,\Hh$ as follows:
\begin{equation}\label{eq: alpha=1}
\begin{cases}
a_0&=2r-1,\\
\Hh&=\frac{j^2(2r-1)}{2}+v_0-p_0\\
a_1&=-\frac{2r(v_1(p_1+j^2r)+w_1q_1)}{(p_1+j^2r)^2+q_1^2},\\
b_1&=-\frac{2r(w_1(p_1+j^2r)-v_1q_1)}{(p_1+j^2r)^2+q_1^2},
\end{cases}
\end{equation}
where $r$ is the unique number that satisfies the following equation 
\begin{equation}\label{eq: r}
\left(1-\frac{1}{r}\right)\left(\left(p_1+j^2r\right)^2+q_1^2\right)=v_1^2+w_1^2\quad \text{and}\quad r\geq 1.
\end{equation}
Then the pair $(m(x),\Hh)$, where
\begin{equation}\label{eq: 44}
m(x)=\frac{1}{a_0+a_1\cos(2\pi x)+b_1\sin(2\pi x)},
\end{equation}
is the unique solution of \eqref{eq: main current}.
\end{theorem}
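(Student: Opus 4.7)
The plan is to apply Theorem~\ref{thm: free for trig} with $n=1$ and to solve the resulting system \eqref{eq: system main for original} in closed form, exploiting the fact that for $\alpha = 1$ the antiderivative $\phi_1$ is a logarithm.

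First I would compute $\Phi_1$ explicitly. With $c_j = j^2/2$ and the abbreviations $\tilde a_0 = a_0^*-v_0$, $\tilde a_1 = a_1^*-v_1$, $\tilde b_1 = b_1^*-w_1$, definition~\eqref{eq: upperphi} becomes
\begin{equation*}
\Phi_1(a_0^*,a_1^*,b_1^*) = \frac{j^2}{2}\int_{\Tt}\ln\bigl(\tilde a_0 + \tilde a_1\cos(2\pi y)+\tilde b_1\sin(2\pi y)\bigr)\,dy.
\end{equation*}
A rotation in $y$ reduces this to $\int_0^1\ln(A+R\cos(2\pi y))\,dy$ with $A=\tilde a_0$, $R=\sqrt{\tilde a_1^2+\tilde b_1^2}$, whose classical value $\ln\tfrac{A+\sqrt{A^2-R^2}}{2}$ yields
\begin{equation*}
\Phi_1 = \frac{j^2}{2}\ln\frac{\tilde a_0+D}{2},\qquad D:=\sqrt{\tilde a_0^2-\tilde a_1^2-\tilde b_1^2}.
\end{equation*}

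Differentiating, the first equation of~\eqref{eq: system main for original} collapses to $j^2/(2D) = 1$, i.e.\ $D = j^2/2$. Introducing $r$ by $\tilde a_0 + j^2/2 = j^2 r$ (equivalently, $a_0 := 2\tilde a_0/j^2 = 2r-1$), the remaining two equations, after clearing the common factor, rearrange into the linear system
\begin{equation*}
\begin{pmatrix} p_1+j^2 r & -q_1 \\ q_1 & p_1+j^2 r \end{pmatrix}\begin{pmatrix}a_1^*\\ b_1^*\end{pmatrix}=\begin{pmatrix}p_1 v_1-q_1 w_1\\ q_1 v_1+p_1 w_1\end{pmatrix}.
\end{equation*}
Solving this system (its determinant is $(p_1+j^2 r)^2+q_1^2$) and passing to the normalized coefficients $a_1 := 2\tilde a_1/j^2$, $b_1 := 2\tilde b_1/j^2$ appearing in~\eqref{eq: 44}, one recovers the expressions for $a_1,b_1$ in~\eqref{eq: alpha=1}; the formula $\Hh = v_0+j^2(2r-1)/2-p_0$ follows from~\eqref{eq: H=a0-p0}. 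Finally, substituting the closed-form $\tilde a_1,\tilde b_1$ into the relation $D^2 = (j^2/2)^2$, and using the identity $(v_1A+w_1q_1)^2+(w_1A-v_1q_1)^2 = (v_1^2+w_1^2)(A^2+q_1^2)$ with $A = p_1+j^2 r$, collapses the constraint to the scalar equation~\eqref{eq: r}.

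It remains to verify that~\eqref{eq: r} has a unique solution with $r\geq 1$. At $r=1$ the left-hand side vanishes while the right-hand side $v_1^2+w_1^2$ is nonnegative, and the left-hand side tends to $+\infty$ as $r\to\infty$, so the intermediate value theorem supplies existence; uniqueness of $r$ is inherited from the uniqueness of the smooth solution asserted in Theorem~\ref{thm: free for trig}, via the bijective correspondence between $r$ and $\tilde a_0$ on the admissible set. The main obstacle is mostly careful algebra: reconciling the two normalizations for the Fourier coefficients (those of Theorem~\ref{thm: free for trig} versus those of~\eqref{eq: 44}) and checking that the branch $r\geq 1$ is the one compatible with $m>0$, which reduces to requiring $D = j^2/2 \geq 0$ together with $\tilde a_0 > 0$.
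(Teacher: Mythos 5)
Your proposal is correct, and its route is a genuine variant of the paper's. The paper works directly with the current formulation \eqref{eq: main current}: it observes that for $\alpha=1$ the reciprocal $1/m$ must itself be a first-order trigonometric polynomial, hence $m$ has the form \eqref{eq: 44}, substitutes this ansatz into \eqref{eq: main current}, and matches Fourier coefficients by evaluating the three explicit integrals \eqref{eq: 46}; the resulting four algebraic relations are then reduced to the cubic \eqref{eq: r} in $r$, and uniqueness of $r\geq 1$ is obtained by checking directly that the left-hand side of \eqref{eq: r} is increasing for $r\geq 1$. You instead pass through the abstract machinery of Theorem~\ref{thm: free for trig} and system \eqref{eq: system main for original}: you compute the potential $\Phi_1$ in closed form using the classical log-integral $\int_0^1\ln(A+R\cos(2\pi y))\,dy=\ln\tfrac{A+\sqrt{A^2-R^2}}{2}$, differentiate, and solve the resulting finite-dimensional system. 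The two calculations are of course closely related --- the paper's three integrals are exactly the components of $\nabla\Phi_1$ up to the factor $c_j$ --- and I verified that your linear system in $(a_1^*,b_1^*)$ and your reduction to \eqref{eq: r} are algebraically correct and recover \eqref{eq: alpha=1} precisely. The main difference in substance is the uniqueness argument for $r$: you deduce it from the abstract uniqueness of Theorem~\ref{thm: free for trig} via the bijection $r\leftrightarrow\tilde a_0$, whereas the paper proves it by direct monotonicity of the cubic; both are valid, though the paper's route is more elementary and self-contained. A minor point worth spelling out if you flesh this into a full proof: to invoke Proposition~\ref{thm:equiv} and the uniqueness of Theorem~\ref{thm: free for trig}, one should check that the constructed $(a_0^*,a_1^*,b_1^*)$ lies in $\mathcal{C}$; this follows since $\tilde a_0^2-\tilde a_1^2-\tilde b_1^2=(j^2/2)^2>0$ and $\tilde a_0=\tfrac{j^2}{2}(2r-1)>0$ give $\tilde a_0>\sqrt{\tilde a_1^2+\tilde b_1^2}$, and the companion constraint $\tilde a_1^2+\tilde b_1^2=j^4 r(r-1)\geq 0$ with $\tilde a_0>0$ forces $r\geq 1$, exactly as you indicate at the end.
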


Besides the trigonometric-polynomial case we also study \eqref{eq: main current} for general $G$. In the latter case, we approximate $G$ by trigonometric polynomials and recover the solution of \eqref{eq: main current} as the limit of solutions of approximate problems. More precisely, we prove the following theorem.
\begin{theorem}\label{thm: main general}
Let $G \in C^2(\Tt),\ V\in C^2(\Tt)$ and $G$ satisfies \eqref{eq: G_mon}, \eqref{eq: G weight}. Then, there exists a sequence of trigonometric polynomials $\{G_n\}_{n\in \Nn}$ such that
\begin{itemize}
\item[\textit{i.}] $G_n$ satisfies \eqref{eq: G_mon} and \eqref{eq: G weight} for all $n\in \Nn$,
\item[\textit{ii.}]$\lim\limits_{n\to \infty}\|G-G_n\|_{C^2(\Tt)}=0$.
\end{itemize}
Furthermore, for $n\in\Nn$ denote by $(m_n,\Hh_n) \in C^2(\Tt)\times \Rr$ the solution of \eqref{eq: main current} corresponding to $G_n$ (the existence of this solution is guaranteed by Theorem \ref{thm: free for trig}). Then, there exists $(m,\Hh) \in C^2(\Tt)\times \Rr$ such that
\begin{equation}\label{eq: 23}
\begin{cases}
\lim\limits_{n\to \infty}\|m-m_n\|_{C^2(\Tt)}=0,\\
\lim\limits_{n\to \infty}(\Hh-\Hh_n)=0.
\end{cases}
\end{equation}
Consequently, $(m,\Hh)$ is the unique smooth solution of \eqref{eq: main current} corresponding to $G$.
\end{theorem}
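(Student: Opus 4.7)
The plan is the classical approximation--estimates--compactness--limit scheme, exploiting Theorem~\ref{thm: free for trig} at the approximate level.

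For Step 1 (constructing $G_n$), I would take $G_n$ to be the Fej\'er mean of $G$, i.e.\ the Ces\`aro average of the first $n+1$ Fourier partial sums of $G$. This construction multiplies the $k$-th Fourier coefficient of $G$ by $(1-|k|/(n+1))_+\geq 0$, so the non-negativity of the cosine coefficients (which, by the remark following Theorem~\ref{thm: free for trig}, is the content of \eqref{eq: G_mon}) and the strict positivity of the zeroth coefficient (which is \eqref{eq: G weight}) are inherited by $G_n$ for all $n$ large enough, giving (i). Since $G\in C^2(\Tt)$ and Fej\'er summation commutes with differentiation at the Fourier level, $G_n''$ is the Fej\'er mean of $G''\in C^0(\Tt)$; Fej\'er's theorem then yields $\|G-G_n\|_{C^2(\Tt)}\to 0$, proving (ii).

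For Step 2 (uniform estimates), Theorem~\ref{thm: free for trig} produces a smooth pair $(m_n,\Hh_n)$ for each $n$. Integrating \eqref{eq: main current} over $\Tt$ and using $\int_\Tt m_n=1$ and Fubini gives
\begin{equation*}
\Hh_n=\int_\Tt\frac{j^2}{2m_n(x)^\alpha}\,dx+\int_\Tt V(x)\,dx-\hat{G}_n(0),
\end{equation*}
which produces a uniform lower bound on $\Hh_n$. Evaluating \eqref{eq: main current} at a maximizer of $m_n$, where $m_n\geq 1$, and using $\|G_n*m_n\|_\infty\leq\|G_n\|_\infty$, gives a matching upper bound $\Hh_n\leq j^2/2+\|V\|_\infty+\|G_n\|_\infty$. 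Substituting the resulting uniform bound on $|\Hh_n|$ back into the equation yields a uniform positive lower bound on $m_n$. The \emph{main obstacle} is the uniform upper bound on $m_n$, equivalently a uniform strict positive lower bound on $\Hh_n+G_n*m_n-V$; to obtain it I would exploit the monotonicity hypothesis \eqref{eq: G_mon} on $G$ together with the mass constraint $\int_\Tt m_n=1$, which together should rule out concentration of $m_n$. Once $0<c_1\leq m_n\leq c_2$ uniformly, differentiating \eqref{eq: main current} once and twice expresses $m_n'$ and $m_n''$ algebraically in terms of $m_n$, $V$, $V'$ and the derivatives of $G_n*m_n$, so the $C^2$-boundedness of $G_n$ transfers to uniform $C^2$ bounds on $m_n$.

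For Step 3 (compactness and passage to the limit), Arzel\`a--Ascoli extracts a subsequence with $m_{n_k}\to m$ in $C^2(\Tt)$ and $\Hh_{n_k}\to \Hh$ in $\Rr$, where $m\in C^2(\Tt)$, $m>0$, $\int_\Tt m=1$. The estimate
\begin{equation*}
\|G_{n_k}*m_{n_k}-G*m\|_{C^0}\leq \|G_{n_k}-G\|_{C^0}\|m_{n_k}\|_{L^1}+\|G\|_{C^0}\|m_{n_k}-m\|_{L^1}
\end{equation*}
tends to $0$, and analogous inequalities hold for derivatives, so passing to the limit in \eqref{eq: main current} gives a classical solution $(m,\Hh)$ of the problem for $G$. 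Finally, uniqueness of smooth solutions of \eqref{eq: main current} under \eqref{eq: G_mon} and \eqref{eq: G weight}, proved by the standard Lasry--Lions monotonicity argument applied to the difference of two putative solutions, forces the whole sequence (and not merely a subsequence) to converge, yielding \eqref{eq: 23}.
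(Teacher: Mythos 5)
Your overall architecture matches the paper's: approximate $G$ by Fej\'er (Ces\`aro) means, observe that the Ces\`aro weights $\tfrac{n+1-k}{n+1}\geq 0$ preserve $p_k\geq 0$ while leaving $p_0$ unchanged (so (i) holds for all $n$, not just large $n$), get uniform a priori bounds on $(m_n,\Hh_n)$, apply Arzel\`a--Ascoli, pass to the limit, and invoke uniqueness to upgrade subsequential convergence to full convergence. Your bounds on $\Hh_n$ and the lower bound on $m_n$ also work.

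The gap is exactly where you flag the ``main obstacle'': the uniform upper bound on $m_n$. You propose to get it from the monotonicity hypothesis \eqref{eq: G_mon} together with the mass constraint, but that is not how the paper argues and it is not clear it can be made to work --- monotonicity is a soft, sign condition on a quadratic form and does not by itself prevent concentration. In fact the paper explicitly remarks that the stability part of the argument does not require the $G_n$ to be monotone. The paper's mechanism is quantitative and uses the $C^2$ convergence and the restriction $0<\alpha\leq 2$: write
\begin{equation*}
f_n(x)=\int_\Tt G_n(x-y)m_n(y)\,dy+\Hh_n-V(x),\qquad m_n=\frac{(j^2/2)^{1/\alpha}}{f_n^{1/\alpha}},
\end{equation*}
note that $\|f_n''\|_{\infty}\leq \sup_n\|G_n''\|_{\infty}\,\|m_n\|_{L^1}+\|V''\|_{\infty}=:2C$ is uniformly bounded, and let $\sigma_n=\min_\Tt f_n=f_n(x_n)$. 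Since $f_n'(x_n)=0$, Taylor gives $f_n(x)\leq \sigma_n+C(x-x_n)^2$, hence
\begin{equation*}
1=\int_\Tt m_n\geq (j^2/2)^{1/\alpha}\int_{-1/2}^{1/2}\frac{dx}{(\sigma_n+Cx^2)^{1/\alpha}},
\end{equation*}
and the right-hand side diverges as $\sigma_n\to 0^+$ precisely because $1/\alpha\geq 1/2$, i.e.\ $\alpha\leq 2$. This forces $\sigma_n\geq\delta_0>0$ uniformly, which is the upper bound on $m_n$. This is also the one place where the standing assumption $0<\alpha\leq 2$ is essential, and your sketch does not surface that. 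A secondary but related point: the paper does not differentiate the equation twice to get a $C^2$ bound on $m_n$ directly; it gets uniform Lipschitz bounds on $m_n$ and $1/m_n^\alpha$ (from the $C^1$ bound on $G_n$ and $V$), extracts a $C^0$ limit, and then bootstraps $C^2$ convergence from the closed-form expression $m_n=(j^2/2)^{1/\alpha}f_n^{-1/\alpha}$ once $f_n$ is known to stay uniformly away from zero. Your proposed route of differentiating twice would also work once the two-sided bounds are in place, but the missing ingredient is the quantitative lower bound $\sigma_n\geq\delta_0$ described above, not monotonicity.
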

In combination with preceding results this previous theorem provides a convenient method for numerical calculations of solutions of \eqref{eq: main current}.

We also present a possible way to apply our methods to more general one-dimensional MFG models. We consider the following generalization of \eqref{eq: main}
\begin{equation}\label{eq: main extension}
\begin{cases}
H(x,u_x,m)=\mathcal{F}\left(\int\limits_{\Tt} G(x-y)m(y)dy\right)+\Hh,\\
(mH'_p(x,u_x,m))_x=0,\\
m>0,\ \int\limits_{\Tt} m(x)dx=1.
\end{cases}
\end{equation}
In \eqref{eq: main extension}, $G$ is a given kernel, $H:\Tt\times \Rr\times\Rr^+ \to \Rr,\ (x,p,m)\mapsto H(x,p,m)$, is a given Hamiltonian, and $\mathcal{F}:X \to \Rr$ is a given coupling, where $X$ can be a functional space or $\Rr$. We discuss, formally, how our techniques apply to models like \eqref{eq: main extension}.

The paper is organized as follows. In Section \ref{sec:assum} we present the main assumptions and notation. In Section \ref{sec:current} we study \eqref{eq: main current} for the case $j=0$.

Next, in Section \ref{sec:trig} we analyze \eqref{eq: main current} when $G$ is a trigonometric polynomial and prove Theorem \ref{thm: free for trig}, Corollary \ref{crl:sym_case} and Theorem \ref{prp: alpha=1}. In Section \ref{sec:stability} we analyze \eqref{eq: main current} for a general $G$ and prove Theorem \ref{thm: main general}.

In Section \ref{sec:num} we present some numerical experiments. Finally, in Section \ref{sec:extensions} we discuss possible extensions of our results and a future work.

\section{Assumptions}\label{sec:assum}

Throughout the paper we assume that $G \in C^2(\Tt), V\in C^2(\Tt)$. Moreover, we always assume that
\begin{equation}\label{eq: G_mon}
\int\limits_{\Tt^2} G(x-y) f(x)f(y)dxdy \geq 0,
\end{equation}
for all $f\in C(\Tt)$ and
\begin{equation}\label{eq: G weight}
\int\limits_{\Tt} G(y)dy>0.
\end{equation}
Denote by $G[m](x)=\int\limits_{\Tt}G(x-y)m(y)dy$ the coupling in \eqref{eq: main}. Then, \eqref{eq: G_mon} is equivalent to the condition
\begin{equation*}
	\langle G[m_2]-G[m_1],m_2-m_1\rangle \geq 0,\ \text{for all}\ m_1,m_2,
\end{equation*}
that is the monotonicity of the coupling $G[m]$ and plays an essential role in our analysis. In general, monotonicity of the coupling is fundamental in the regularity theory for MFGs: system \eqref{eq: mfg} degenerates in several directions if the coupling is not monotone. In the view of \eqref{eq: control problem} monotonicity means that agents prefer sparsely populated areas. See \cite{cirant'16} and \cite{GNP} for a systematic study of non-monotone MFGs.

Assumption \eqref{eq: G weight} is a technical assumption. It is not restrictive since one can always modify the kernel by adding a positive constant.

Furthermore, we assume that
\begin{equation*}
	0<\alpha \leq 2.
\end{equation*}
This, also, is a natural assumption for MFGs from the regularity theory perspective. The, now standard, uniqueness proof for MFG systems in \cite{ll3} is valid only for $\alpha$ in this range. This is a strong indication of degeneracy for $\alpha$ outside of this range (which is observed and discussed in detail in \cite{GNP2}). In fact, our methods also reflect these limitations in a natural way.

\section{The 0-current case}\label{sec:current}

As we have pointed out in the Introduction, \eqref{eq: main} can be reduced to \eqref{eq: main current} by eliminating $u$ from the second equation. The analysis of \eqref{eq: main current} is completely different for the case $j=0$ and for the case $j\neq 0$. In fact, the case $j=0$ is much simpler to analyze. Nevertheless, it is more degenerate. In this section, we discuss the case $j=0$.

Firstly, we observe that $j=0$ can occur only when $c=0$. Recall that in this paper we are concerned only with smooth solutions. Therefore, if $(u,m,\Hh)$ is a solution of \eqref{eq: main} we obtain \eqref{eq: reduction} and
\begin{align*}
	c=\int\limits_{\Tt}(u_x(x)+c) dx=j\int\limits_{\Tt}\frac{dx}{m(x)^{\alpha-1}}.
\end{align*}
Hence, $j=0$ if and only if $c=0$. Furthermore, if $c=j=0$ \eqref{eq: main} reduces to
\begin{equation}\label{eq: main current 0}
\begin{cases}
V(x)=\int\limits_{\Tt} G(x-y)m(y)dy+\Hh,\ x\in\Tt\\
m>0,\ \int\limits_{\Tt} m(x)dx=1.
\end{cases}
\end{equation}
At this point, we drop assumptions \eqref{eq: G_mon} and \eqref{eq: G weight} because they are irrelevant. Suppose that $V,G,m$ have following Fourier expansions
\begin{align*}
	V(x)&=\sum\limits_{k=0}^{\infty}v_k \cos(2\pi k x)+\sum\limits_{k=1}^{\infty} w_k \sin(2\pi k x)\\ \notag
	G(x)&=\sum\limits_{k=0}^{\infty}p_k \cos(2\pi k x)+\sum\limits_{k=1}^{\infty} q_k \sin(2\pi k x)\\ \notag
	m(x)&=\sum\limits_{k=0}^{\infty}a_k \cos(2\pi k x)+\sum\limits_{k=1}^{\infty} b_k \sin(2\pi k x)
\end{align*}
Then, \eqref{eq: main current 0} is equivalent to
\begin{equation}\label{eq: 47}
	\begin{cases}
	v_0&=p_0a_0+\Hh\\
	v_k&=\frac{1}{2}(p_k a_k-q_k b_k)\\
	w_k&=\frac{1}{2}(p_k b_k+q_k a_k),\ k\geq 1\\
	a_0&=1\\
	m&>0.
	\end{cases}
\end{equation}
Therefore, we get that
\begin{equation}\label{eq: Hbar current 0}
	\Hh=v_0-p_0,
\end{equation}
and
\begin{equation*}
	\begin{cases}
	a_k&=\frac{2(p_k v_k+q_kw_k)}{p_k^2+q_k^2}\\
	b_k&=\frac{2(p_k w_k-q_kv_k)}{p_k^2+q_k^2},\ k\geq 1.
	\end{cases}
\end{equation*}
Hence, formally, if $p=0$ and $V,G$ are given, we obtain that $\Hh$ is given by \eqref{eq: Hbar current 0} and
\begin{align}\label{eq: m current 0}
	m(x)&=1+\sum\limits_{k=1}^{\infty}\frac{2}{p_k^2+q_k^2}\left((p_k v_k+q_kw_k)\cos(2\pi kx)+(p_k w_k-q_kv_k)\sin(2\pi kx)\right)\\ \notag
	&=1+\sum\limits_{k=1}^{\infty}\frac{2}{r_k}\left(v_k\cos(2\pi kx+\theta_k)+w_k\sin(2\pi kx+\theta_k)\right),
\end{align}
where $r_k e^{i \theta_k}=p_k+i q_k$.

Nevertheless, there are several issues in the previous analysis. Firstly, \eqref{eq: 47} may fail to have solutions or may have infinite number of solutions. If $p_k^2+q_k^2=0$ for some $k\geq 1$ and $v_k^2+w_k^2>0$ then \eqref{eq: 47} and \eqref{eq: main current 0} do not have solutions. On the other hand if $p_k^2+q_k^2=v_k^2+w_k^2=0$ then $a_k,b_k$ can be chosen arbitrarily, so \eqref{eq: 47} and \eqref{eq: main current 0} may have infinite number of solutions. Thus, if $p_k^2+q_k^2=0$ for some $k\geq 1$ \eqref{eq: main current} degenerates in different ways when $j=0$.

Furthermore, if $p_k^2+q_k^2>0$ for all $k \geq 1$ then $m$, at least formally, is given by \eqref{eq: m current 0}. Here we face two potential problems. First, one has to make sense of the formula \eqref{eq: m current 0}. In other words, the series in \eqref{eq: m current 0} may not be summable in any appropriate sense. Moreover, summability of \eqref{eq: m current 0} is a delicate issue and strongly depends on the relation between $\{v_k,w_k\}$ and $\{p_k,q_k\}$.

Additionally, even if the series \eqref{eq: m current 0} converge to a smooth function, we still have the necessary condition $m(x)>0$ and that might fail depending on $V$ and $G$. For instance, if $V$ is such that $v_k=w_k=0$ for all $k\geq 2$, and $G$ is such that $p_k^2+q_k^2>0$ for all $k \geq 1$ we get that
\begin{equation*}
	m(x)=1+\frac{2}{r_1}\left(v_1\cos(2\pi x+\theta_1)+w_1\sin(2\pi x+\theta_1)\right).
\end{equation*}
Therefore,
\begin{equation*}
	\min\limits_{x\in \Tt}m(x)=1-\frac{2\sqrt{v_1^2+w_1^2}}{r_1}=1-\frac{2\sqrt{v_1^2+w_1^2}}{\sqrt{p_1^2+q_1^2}}>0
\end{equation*}
if and only if $p_1^2+q_1^2>4(v_1^2+w_1^2)$. Hence, if the latter is violated \eqref{eq: main current 0} does not have smooth solutions.

Thus, existence of smooth, positive solutions for \eqref{eq: main current 0} depends on peculiar properties of $V$ and $G$. This is quite different in the case $j\neq 0$, where \eqref{eq: main current} obtains smooth, positive solutions under general assumptions on $V$ and $G$.

\section{$G$ is a trigonometric polynomial}\label{sec:trig}

From here on, we assume that $j\neq0$. In this section, our main goal is to prove Theorem \ref{thm: free for trig}, Corollary \ref{crl:sym_case} and Theorem \ref{prp: alpha=1}.

We break the proof of Theorem \ref{thm: free for trig} into three steps. Firstly, we show that \eqref{eq: main current} is equivalent to \eqref{eq: system main for original} - Proposition \ref{thm:equiv}. Secondly, we prove that \eqref{eq: system main for original} has at most one solution - Proposition \ref{prp: system reduced uniqueness}. And thirdly, we show that \eqref{eq: system main for original} has at least one solution - Proposition \ref{prp: existence reduced system}.

We use a short-hand notation $(\bx,\by)$ for a vector $(x_0,x_1,\cdots,x_n,y_1,y_2,\cdots,y_n) \in \Rr^{2n+1}$, where $\bx=(x_0,x_1,\cdots,x_n)$ and $\by=(y_1,y_2,\cdots,y_n)$. For every $\bx=(x_0,x_1,\cdots,x_n)\in \Rr^{n+1}$ we denote by $\bx'=(x_1,\cdots,x_n) \in \Rr^n$.

Here we perform the analysis in terms of Fourier coefficients of $G$. Hence, we formulate assumptions \eqref{eq: G_mon}, \eqref{eq: G weight} in terms of these coefficients.
\begin{lemma}
For a $G$ given by \eqref{eq: G_rep} the assumption \eqref{eq: G_mon} is equivalent to
\begin{equation}\label{eq: >=0 coeffs}
p_k \geq 0,\quad \text{for}\quad 0\leq k \leq n.
\end{equation}
Furthermore, the assumption $\eqref{eq: G weight}$ is equivalent to
\begin{equation}\label{eq: p_0>0}
p_0>0.
\end{equation}
\end{lemma}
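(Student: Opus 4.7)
The statement is essentially the fact that the Fourier coefficients of a positive-definite kernel on $\Tt$ must be nonnegative, specialised to a trigonometric polynomial. I will treat the two equivalences separately, the second being a one-line calculation and the first requiring the standard orthogonality argument for trigonometric systems.

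\emph{The equivalence with \eqref{eq: p_0>0}.} Integrating \eqref{eq: G_rep} termwise and using $\int_{\Tt}\cos(2\pi k y)\,dy=\int_{\Tt}\sin(2\pi k y)\,dy=0$ for $k\geq 1$, I get $\int_{\Tt}G(y)\,dy=p_0$, so \eqref{eq: G weight} is equivalent to \eqref{eq: p_0>0}.

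\emph{The equivalence with \eqref{eq: >=0 coeffs}.} The idea is to diagonalise the bilinear form in \eqref{eq: G_mon} in the Fourier basis. Using the addition formulas
\begin{align*}
\cos(2\pi k(x-y))&=\cos(2\pi k x)\cos(2\pi k y)+\sin(2\pi k x)\sin(2\pi k y),\\
\sin(2\pi k(x-y))&=\sin(2\pi k x)\cos(2\pi k y)-\cos(2\pi k x)\sin(2\pi k y),
\end{align*}
and introducing, for $f\in C(\Tt)$, the partial Fourier coefficients $A_k=\int_{\Tt}f(x)\cos(2\pi k x)\,dx$ and $B_k=\int_{\Tt}f(x)\sin(2\pi k x)\,dx$, a direct computation will yield
\begin{equation*}
\int_{\Tt^2}G(x-y)f(x)f(y)\,dx\,dy=p_0 A_0^2+\sum_{k=1}^{n}p_k\bigl(A_k^2+B_k^2\bigr),
\end{equation*}
the $q_k$-contributions cancelling by antisymmetry ($B_k A_k-A_k B_k=0$). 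From this identity the implication \eqref{eq: >=0 coeffs}$\Rightarrow$\eqref{eq: G_mon} is immediate.

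For the converse, I will test \eqref{eq: G_mon} against the elements of the trigonometric basis to isolate one coefficient at a time. Taking $f\equiv 1$ gives $A_0=1$ and all other coefficients zero, so the bilinear form equals $p_0$, forcing $p_0\geq 0$. Taking $f(x)=\cos(2\pi k x)$ for $1\leq k\leq n$ yields, by orthogonality, $A_j=\frac{1}{2}\delta_{jk}$ and $B_j=0$, so the bilinear form equals $\tfrac{1}{4}p_k$, forcing $p_k\geq 0$. This covers all indices $0\leq k\leq n$.

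No step is really an obstacle; the only minor care needed is to verify that the $q_k$-terms really drop out and that the chosen test functions indeed isolate each coefficient. The whole proof is a short orthogonality computation.
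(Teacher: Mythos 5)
Your proof is correct and is essentially the same as the paper's: you both diagonalize the bilinear form $\int_{\Tt^2} G(x-y)f(x)f(y)\,dx\,dy$ in the Fourier basis (the paper uses coefficients normalized with a factor of $2$, giving $p_0c_0^2+\tfrac14\sum p_k(c_k^2+s_k^2)$, which is the same quantity as your $p_0A_0^2+\sum p_k(A_k^2+B_k^2)$), and the $\int_\Tt G = p_0$ observation is identical. You are slightly more explicit than the paper on the converse direction by naming the test functions $f\equiv 1$ and $f=\cos(2\pi k\cdot)$, where the paper simply declares the rest ``evident,'' but there is no difference in method.
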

\begin{proof}
Let $f\in C(\Tt)$ and
\begin{align*}
c_0&=\int\limits_{\Tt} f(x)dx,\\
c_k&=2\int\limits_{\Tt}f(x) \cos(2\pi kx)dx,\quad s_k=2\int\limits_{\Tt}f(x) \sin(2\pi kx)dx,\quad k\geq 1.
\end{align*}
A straightforward computation yields 
\begin{align*}
\int\limits_{\Tt^2} G(x-y) f(x)f(y)dxdy=p_0c_0^2+\frac{1}{4}\sum\limits_{k=1}^{n}p_k(c_k^2+s_k^2),\ \int\limits_{\Tt} G(x)dx=p_0.
\end{align*}
The rest of the proof is evident.
\end{proof}
\begin{remark}
From here on, we assume that \eqref{eq: >=0 coeffs} and \eqref{eq: p_0>0} hold.
\end{remark}

\begin{proposition}[Equivalent formulation]\label{thm:equiv}
Let $(m,\Hh)\in C(\Tt)\times \Rr$ be a solution of \eqref{eq: main current}. Then, $(m,\Hh)$ is given by formulas \eqref{eq: ansatz m} and \eqref{eq: H=a0-p0} for some $(a_0,\cdots,a_n,b_1,\cdots,b_n) \in \mathcal{C}$ that is a solution of \eqref{eq: system main for original}. 

Conversely, if $(a_0,\cdots,a_n,b_1,\cdots,b_n) \in \mathcal{C}$ is a solution of the system \eqref{eq: system main for original}, then $(m,\Hh)$ defined by \eqref{eq: ansatz m} and \eqref{eq: H=a0-p0} is a solution for \eqref{eq: main current}.
\end{proposition}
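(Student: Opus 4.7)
The plan is to exploit the rigidity imposed by $G$ being a trigonometric polynomial of degree $n$: regardless of $m$, the convolution $G[m](x):=\int_{\Tt}G(x-y)m(y)\,dy$ lies in $\spn\{1,\cos(2\pi kx),\sin(2\pi kx):1\le k\le n\}$, so any solution $m$ of the first equation in \eqref{eq: main current} is forced to take the algebraic form \eqref{eq: ansatz m}. I would then match Fourier coefficients on both sides of that equation and close the loop via the identity $\phi_{\alpha}'(x)=c_j/x^{1/\alpha}$, which upon differentiating \eqref{eq: upperphi} under the integral sign shows that $\partial_{a_0}\Phi_\alpha$, $2\partial_{a_k}\Phi_\alpha$, $2\partial_{b_k}\Phi_\alpha$ recover, respectively, the constant term and $k$-th Fourier coefficients of $m$ defined by \eqref{eq: ansatz m}.

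For the forward direction, given a solution $(m,\Hh)$ of \eqref{eq: main current}, I would first solve its first equation algebraically for $m$: the right-hand side equals $j^2/(2m^{\alpha})>0$, so it can be inverted to $m(x)=c_j\bigl(G[m](x)+\Hh-V(x)\bigr)^{-1/\alpha}$. Setting
\[
a_0+\sum_{k=1}^n a_k\cos(2\pi kx)+b_k\sin(2\pi kx):=G[m](x)+\Hh
\]
immediately produces \eqref{eq: ansatz m} with $(a_0,\ldots,b_n)\in\mathcal{C}$ by positivity of $m$. Expanding $m$ in Fourier series as $m(y)=\alpha_0+\sum_{k\geq 1}\alpha_k\cos(2\pi ky)+\beta_k\sin(2\pi ky)$ and performing a direct mode-by-mode convolution computation yields that $G[m]$ has constant coefficient $p_0\alpha_0$ and $k$-th harmonics $\tfrac{1}{2}(p_k\alpha_k-q_k\beta_k)$, $\tfrac{1}{2}(p_k\beta_k+q_k\alpha_k)$. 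Matching against the Fourier coefficients of $G[m]+\Hh$ gives, respectively, $\Hh=a_0-p_0$ (using $\alpha_0=\int m=1$), which is \eqref{eq: H=a0-p0}, and a $2\times 2$ linear system which I invert to get $\alpha_k/2=(p_ka_k+q_kb_k)/(p_k^2+q_k^2)$ and $\beta_k/2=(p_kb_k-q_ka_k)/(p_k^2+q_k^2)$. Substituting these into the partial-derivative identities of the preceding paragraph produces exactly \eqref{eq: system main for original}.

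The reverse direction runs the same chain of identities backwards. Given $(a_0,\ldots,b_n)\in\mathcal{C}$ satisfying \eqref{eq: system main for original}, I define $m,\Hh$ by \eqref{eq: ansatz m}, \eqref{eq: H=a0-p0}; smoothness and positivity of $m$ are immediate from $(a_0,\ldots,b_n)\in\mathcal{C}$ and $V\in C^2(\Tt)$, and $\int m=1$ follows from the first equation of \eqref{eq: system main for original} combined with $\partial_{a_0}\Phi_\alpha=\int m$. By construction, the first equation in \eqref{eq: main current} reduces to the identity
\[
G[m](x)+\Hh=a_0+\sum_{k=1}^n a_k\cos(2\pi kx)+b_k\sin(2\pi kx),
\]
whose constant term matches by \eqref{eq: H=a0-p0} and $\int m=1$, and whose $k$-th harmonics match by a one-line algebraic substitution using the convolution formula and the remaining equations of \eqref{eq: system main for original}.

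The main obstacle is purely bookkeeping: the action of convolution by $G$ on the $k$-th Fourier mode is the $2\times 2$ scaling-and-rotation transformation with matrix $\tfrac{1}{2}\left(\begin{smallmatrix}p_k&-q_k\\q_k&p_k\end{smallmatrix}\right)$, which must be inverted consistently while tracking the factor $1/2$ coming from $\int_{\Tt}\cos^2(2\pi ky)\,dy=\int_{\Tt}\sin^2(2\pi ky)\,dy=1/2$. A minor subtlety is the degenerate case $p_k^2+q_k^2=0$ for some $k$, where the $k$-th equation in \eqref{eq: system main for original} formally becomes $0/0$; this can be handled by omitting the corresponding index from \eqref{eq: C}, \eqref{eq: upperphi}, and \eqref{eq: system main for original} and imposing $a_k=b_k=0$.
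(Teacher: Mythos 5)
Your proposal is correct and follows essentially the same route as the paper's proof: in both, the trigonometric-polynomial form of $G$ forces $\frac{j^2}{2m^\alpha}+V$ to be a degree-$n$ trigonometric polynomial, one matches Fourier modes using the convolution formula (tracking the factor $\tfrac12$ from $\int\cos^2 = \int\sin^2 = \tfrac12$), and the resulting coefficient equations are recognized as $\nabla\Phi_\alpha$ via $\phi_\alpha' = c_j/x^{1/\alpha}$. The paper handles the degenerate case $p_k^2+q_k^2=0$ in a separate remark exactly as you do, by dropping the corresponding modes.
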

\begin{remark}
In our analysis we assume that $p_k^2+q_k^2>0$ for all $1\leq k\leq n$. This assumption is not restrictive and the results are valid even if $p_k=q_k=0$ for some $k\geq 1$. Indeed, if $p_k=q_k=0$, then in \eqref{eq: ansatz} there will be no terms with $\cos(2\pi kx)$ and $\sin(2\pi kx)$ and in the subsequent analysis we just have to omit the trigonometric monomials $\cos(2\pi kx)$ and $\sin(2\pi kx)$.
\end{remark}
\begin{proof}[Proof of Proposition \ref{thm:equiv}]
First, we prove the direct implication. Suppose $(m,\Hh) \in C(\Tt)\times \Rr$ is a solution of \eqref{eq: main current}. A straightforward calculation yields
\begin{align*}
\int\limits_{\Tt} G(x-y)m(y)dy&=p_0u_0+\frac{1}{2}\sum\limits_{k=1}^{n}\left(p_k u_k-q_kv_k\right)\cos(2\pi kx)\\
&+\frac{1}{2}\sum\limits_{k=1}^{n}\left(p_k v_k+q_ku_k\right)\sin(2\pi kx),
\end{align*}
where
\begin{align}\label{eq:m_coeffs}
u_0&=\int\limits_{\Tt} m(x)dx,\\ \nonumber
u_k&=2\int\limits_{\Tt}m(x) \cos(2\pi kx)dx,\quad v_k=2\int\limits_{\Tt}m(x) \sin(2\pi kx)dx,\quad k\geq 1.
\end{align}
Therefore, from \eqref{eq: main current} we obtain
\begin{equation}\label{eq: ansatz}
\frac{j^2}{2m^{\alpha}(x)}+V(x)=a_0^*+\sum\limits_{k=1}^{n}a_k^*\cos(2\pi k x)+b_k^*\sin(2\pi k x),
\end{equation}
which is equivalent to \eqref{eq: ansatz m}. The coefficients $\{a_k,b_k\}$ in the previous equation are given by the formulas
\begin{align}\label{eq:conv_coeffs}
a_0^*&=p_0u_0+\Hh,\\ \nonumber
a_k^*&=\frac{1}{2}(p_ku_k-q_kv_k),\quad b_k^*=\frac{1}{2}(p_kv_k+q_ku_k),\quad 1\leq k\leq n.
\end{align}
Since $m>0$ we obtain that $(a_0^*,\cdots,a_n^*,b_1^*,\cdots,b_n^*) \in \mathcal{C}$.

Furthermore, from \eqref{eq:m_coeffs} and \eqref{eq:conv_coeffs} we obtain that $u_0=1$ and \eqref{eq: H=a0-p0}. Next, we plug the expression \eqref{eq: ansatz m} for $m$ in \eqref{eq:m_coeffs}, and from \eqref{eq:conv_coeffs} we obtain the following system
\begin{align}\label{eq: system for original}
\begin{cases}
1&=\int\limits_{\Tt}\frac{c_jdy}{\left(a_0^*+\sum\limits_{k=1}^{n}a_k^*\cos(2\pi k y)+b_k^*\sin(2\pi k y)-V(y)\right)^{\frac{1}{\alpha}}},\\
a_k^*&=p_k\int\limits_{\Tt}\frac{c_j\cos(2\pi k y)dy}{\left(a_0^*+\sum\limits_{k=1}^{n}a_k^*\cos(2\pi k y)+b_k^*\sin(2\pi k y)-V(y)\right)^{\frac{1}{\alpha}}}\\
&-q_k\int\limits_{\Tt}\frac{c_j\sin(2\pi k y)dy}{\left(a_0^*+\sum\limits_{k=1}^{n}a_k^*\cos(2\pi k y)+b_k^*\sin(2\pi k y)-V(y)\right)^{\frac{1}{\alpha}}},\ 1\leq k \leq n,\\
b_k^*&=p_k\int\limits_{\Tt}\frac{c_j\sin(2\pi k y)dy}{\left(a_0^*+\sum\limits_{k=1}^{n}a_k^*\cos(2\pi k y)+b_k^*\sin(2\pi k y)-V(y)\right)^{\frac{1}{\alpha}}}\\
&+q_k\int\limits_{\Tt}\frac{c_j\cos(2\pi k y)dy}{\left(a_0^*+\sum\limits_{k=1}^{n}a_k^*\cos(2\pi k y)+b_k^*\sin(2\pi k y)-V(y)\right)^{\frac{1}{\alpha}}},\ 1\leq k \leq n.
\end{cases}
\end{align}
Furthermore, note that
\begin{align}\label{eq: Phi derivatives}
\frac{\partial \Phi_{\alpha}}{\partial a_0}&=\int\limits_{\Tt}\frac{c_jdy}{\left(a_0+\sum\limits_{k=1}^{n}a_k\cos(2\pi k y)+b_k\sin(2\pi k y)-V(y)\right)^{\frac{1}{\alpha}}}\\ \notag
\frac{\partial \Phi_{\alpha}}{\partial a_k}&=\int\limits_{\Tt}\frac{c_j\cos(2\pi k y)dy}{\left(a_0+\sum\limits_{k=1}^{n}a_k\cos(2\pi k y)+b_k\sin(2\pi k y)-V(y)\right)^{\frac{1}{\alpha}}}\\ \notag
\frac{\partial \Phi_{\alpha}}{\partial b_k}&=\int\limits_{\Tt}\frac{c_j\sin(2\pi k y)dy}{\left(a_0+\sum\limits_{k=1}^{n}a_k\cos(2\pi k y)+b_k\sin(2\pi k y)-V(y)\right)^{\frac{1}{\alpha}}},
\end{align}
for $1\leq k \leq n$. Therefore, \eqref{eq: system for original} can be written as
\begin{align*}
\begin{cases}
1&= \frac{\partial \Phi_{\alpha}(\ba^*,\bb^*)}{\partial a_0},\\
a_k&=p_k\frac{\partial \Phi_{\alpha}(\ba^*,\bb^*)}{\partial a_k}-q_k\frac{\partial \Phi_{\alpha}(\ba^*,\bb^*)}{\partial b_k}\\
b_k&=p_k\frac{\partial \Phi_{\alpha}(\ba^*,\bb^*)}{\partial b_k}+q_k\frac{\partial \Phi_{\alpha}(\ba^*,\bb^*)}{\partial a_k},\ 1\leq k \leq n,
\end{cases}
\end{align*}
where $(\ba^*,\bb^*)=(a_0^*,\cdots,a_n^*,b_1^*,\cdots,b_n^*)$. But this previous system is equivalent to \eqref{eq: system main for original}.

The proof of the converse implication is the repetition of previous arguments in the reversed order.
\end{proof}

Next, we study some properties of $\mathcal{C}$ and $\Phi_{\alpha}$.

\begin{lemma}\label{lma:upperphi}
The following statements hold.
\begin{itemize}
\item[\textit{i.}] $\mathcal{C}$ is convex and open.
\item[\textit{ii.}] $\Phi_{\alpha} \in C^{\infty}(\mathcal{C})$.
\item[\textit{iii.}] For all $(\ba,\bb) \in \mathcal{C}$
\begin{align}\label{eq:Phi 2nd derivatives}
\frac{\partial \Phi_{\alpha}(\ba,\bb)}{\partial a_l\partial a_r}&=-\frac{c_j}{\alpha}\int\limits_{\Tt}\frac{\cos(2\pi ly)\cos(2\pi ry)dy}{\left(a_0+\sum\limits_{k=1}^{n}a_k\cos(2\pi k y)+b_k\sin(2\pi ky)-V(y)\right)^{1+\frac{1}{\alpha}}}\\ \nonumber
\frac{\partial \Phi_{\alpha}(\ba,\bb)}{\partial b_l\partial b_r}&=-\frac{c_j}{\alpha}\int\limits_{\Tt}\frac{\sin(2\pi ly) \sin (2\pi ry)dy}{\left(a_0+\sum\limits_{k=1}^{n}a_k\cos(2\pi k y)+b_k\sin(2\pi ky)-V(y)\right)^{1+\frac{1}{\alpha}}}\\ \nonumber
\frac{\partial \Phi_{\alpha}(\ba,\bb)}{\partial a_l\partial b_r}&=-\frac{c_j}{\alpha}\int\limits_{\Tt}\frac{\cos(2\pi ly) \sin (2\pi ry)dy}{\left(a_0+\sum\limits_{k=1}^{n}a_k\cos(2\pi k y)+b_k\sin(2\pi ky)-V(y)\right)^{1+\frac{1}{\alpha}}}.
\end{align}
\item[\textit{iv.}] $\Phi_\alpha$ is strictly concave. Moreover, for all $(\ba,\bb) \in \mathcal{C}$ and $(\bxi,\bita) \in \Rr^{2n+1}$ we have that
\begin{align}\label{eq:Hessianform}
&(\bxi,\bita)^T D^2_{\ba,\bb} \Phi_\alpha(\ba,\bb) (\bxi,\bita)\\ \nonumber
=&-\frac{c_j}{\alpha}\int\limits_{\Tt}\frac{\left(\eta_0+\sum\limits_{k=1}^{n}\xi_k\cos(2\pi k y)+\eta_k\sin(2\pi k y)\right)^2dy}{\left(a_0+\sum\limits_{k=1}^{n}a_k\cos(2\pi k y)+b_k\sin(2\pi k y)-V(y)\right)^{1+\frac{1}{\alpha}}}\\ \nonumber
\leq& 0,
\end{align}
with equality if and only if $(\bxi,\bita)=\textbf{0}$.
\item[\textit{v.}] $-\nabla \Phi_\alpha$ is strictly monotone; that is, for all $(\bc_1,\bd_1), (\bc_2,\bd_2) \in \mathcal{C}$
\begin{align}\label{eq:gradPhimon}
\langle\nabla \Phi_\alpha(\boldsymbol{c}_2,\boldsymbol{d}_2)-\nabla \Phi_\alpha(\bc_1,\bd_1), (\bc_2-\bc_1,\bd_2-\bd_1)\rangle \leq 0,
\end{align}
with equality if and only if $(\bc_1,\bd_1)=(\bc_2,\bd_2)$.
\end{itemize}
\end{lemma}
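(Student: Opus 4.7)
The plan is to prove (i)--(v) in order. For (i), convexity of $\mathcal{C}$ is immediate from the definition: if $(\ba,\bb),(\ba',\bb') \in \mathcal{C}$ and $\lambda\in[0,1]$, the trigonometric polynomial associated with $\lambda(\ba,\bb)+(1-\lambda)(\ba',\bb')$ minus $V$ is the convex combination of two strictly positive continuous functions on $\Tt$, hence strictly positive. For openness, the map
\begin{equation*}
(\ba,\bb) \mapsto \min_{x\in\Tt}\Bigl[a_0+\sum_{k=1}^n\bigl(a_k\cos(2\pi kx)+b_k\sin(2\pi kx)\bigr)-V(x)\Bigr]
\end{equation*}
is continuous by a standard compactness argument, and $\mathcal{C}$ is its preimage of $(0,\infty)$. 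For (ii) and (iii), I would differentiate under the integral sign: $\phi_\alpha$ is smooth on $(0,\infty)$ with $\phi'_\alpha(x)=c_j x^{-1/\alpha}$ and $\phi''_\alpha(x)=-(c_j/\alpha)x^{-1-1/\alpha}$, and on any compact $K\subset\mathcal{C}$ the denominator $a_0+\sum_k(a_k\cos(2\pi ky)+b_k\sin(2\pi ky))-V(y)$ is uniformly bounded below by a positive constant on $K\times\Tt$. Hence all iterated parameter derivatives of the integrand defining $\Phi_\alpha$ are uniformly dominated on $K\times\Tt$, giving $\Phi_\alpha\in C^\infty(\mathcal{C})$ together with \eqref{eq: Phi derivatives} and \eqref{eq:Phi 2nd derivatives}.

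For (iv), I would substitute (iii) into $(\bxi,\bita)^T D^2_{\ba,\bb}\Phi_\alpha(\ba,\bb)(\bxi,\bita)$. After pulling out the common factor $-c_j/\alpha$ and the common denominator, the numerator of the resulting integrand is a bilinear sum in the $\xi$'s and $\eta$'s against products of cosines and sines which, by the identity $(A+B)^2=A^2+2AB+B^2$, is exactly
\begin{equation*}
\Bigl(\xi_0+\sum_{k=1}^n\bigl(\xi_k\cos(2\pi ky)+\eta_k\sin(2\pi ky)\bigr)\Bigr)^2.
\end{equation*}
This yields the representation \eqref{eq:Hessianform}, which is nonpositive since $c_j,\alpha>0$. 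Equality forces this trigonometric polynomial to vanish identically on $\Tt$, and by $L^2$-orthogonality of $\{1,\cos(2\pi ky),\sin(2\pi ky)\}_{k=1}^n$ all its coefficients must vanish, i.e.\ $(\bxi,\bita)=\mathbf{0}$. Strict concavity of $\Phi_\alpha$ on the convex open set $\mathcal{C}$ then follows from negative definiteness of the Hessian.

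For (v), I would integrate along the segment $\gamma(t)=(\bc_1,\bd_1)+t\bigl((\bc_2,\bd_2)-(\bc_1,\bd_1)\bigr)$, $t\in[0,1]$, which lies in $\mathcal{C}$ by the convexity proved in (i), and apply the fundamental theorem of calculus to $t\mapsto\langle\nabla\Phi_\alpha(\gamma(t)),(\bc_2-\bc_1,\bd_2-\bd_1)\rangle$, obtaining
\begin{equation*}
\langle\nabla\Phi_\alpha(\bc_2,\bd_2)-\nabla\Phi_\alpha(\bc_1,\bd_1),(\bc_2-\bc_1,\bd_2-\bd_1)\rangle = \int_0^1 \gamma'(t)^T D^2\Phi_\alpha(\gamma(t))\gamma'(t)\,dt.
\end{equation*}
This is $\leq 0$ by (iv), and strictly negative when $(\bc_1,\bd_1)\neq(\bc_2,\bd_2)$, since then $\gamma'(t)\neq\mathbf{0}$ and the integrand is strictly negative by the equality case in (iv). The only mildly subtle step is recognizing the perfect-square structure in (iv); once this is identified, everything else reduces to routine differentiation under the integral together with linear independence of the trigonometric basis.
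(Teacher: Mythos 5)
Your proposal is correct and follows essentially the same route as the paper's proof, which merely asserts that (i)--(iv) are ``evident'' or ``straightforward'' and gives for (v) the identical one-dimensional restriction $f(t)=\Phi_\alpha\bigl((1-t)(\bc_1,\bd_1)+t(\bc_2,\bd_2)\bigr)$; you have simply filled in the routine details (dominated differentiation under the integral, the perfect-square structure of the Hessian form, and linear independence of the trigonometric system). Incidentally, your $\xi_0$ in the perfect square is the version consistent with the paper's convention $\bxi=(\xi_0,\dots,\xi_n)$, $\bita=(\eta_1,\dots,\eta_n)$; the $\eta_0$ appearing in the paper's display \eqref{eq:Hessianform} is a typographical slip.
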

\begin{proof}\textit{i.} This statement is evident.\\
\textit{ii.} This statement is evident.\\
\textit{iii.} We obtain \eqref{eq:Phi 2nd derivatives} by a straightforward calculation.\\
\textit{iv.} Equation \eqref{eq:Hessianform} follows from \eqref{eq:Phi 2nd derivatives} by an algebraic manipulation. Moreover, the equality in \eqref{eq:Hessianform} holds if and only if
\[\eta_0+\sum\limits_{k=1}^{n}\xi_k\cos(2\pi k y)+\eta_k\sin(2\pi k y)=0,\ \text{for all}\ y\in \Tt,
\]
which implies $(\bxi,\bita)=\textbf{0}$.\\
\textit{v.} For $t\in [0,1]$ denote by $(\bc(t),\bd(t))=(1-t)(\bc_1,\bd_1)+t(\bc_2,\bd_2)$. Since $\mathcal{C}$ is convex, we have that $(\bc(t),\bd(t))\in \mathcal{C},t\in[0,1]$. Furthermore, denote by $f(t)=\Phi_\alpha(\bc(t),\bd(t))$. We have that $f\in C^{\infty}([0,1])$ because $\Phi_\alpha \in C^{\infty}(\mathcal{C})$. Moreover, by \eqref{eq:Hessianform} we have that
\begin{align*}
f''(t)=(\bc_2-\bc_1,\bd_2-\bd_1)^T D^2_{\ba,\bb} \Phi_\alpha(\bc(t),\bd(t)) (\bc_2-\bc_1,\bd_2-\bd_1)<0
\end{align*}
for all $t\in[0,1]$, unless $(\bc_1,\bd_1)=(\bc_2,\bd_2)$. Hence, 
\[f'(1)-f'(0) \leq 0,
\]
with equality if and only if $(\bc_1,\bd_1)=(\bc_2,\bd_2)$. We complete the proof by noting that
\begin{align*}
\langle\nabla \Phi_\alpha(\boldsymbol{c}_2,\boldsymbol{d}_2)-\nabla \Phi_\alpha(\bc_1,\bd_1), (\bc_2-\bc_1,\bd_2-\bd_1)\rangle=f'(1)-f'(0).
\end{align*}
\end{proof}

\begin{proposition}[Uniqueness.]\label{prp: system reduced uniqueness}
If $(\bc_1,\bd_1),\ (\bc_2.\bd_2)\in \mathcal{C}$ are solutions of \eqref{eq: system main for original}, then $(\bc_1,\bd_1)=(\bc_2.\bd_2)$.
\end{proposition}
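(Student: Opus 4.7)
The plan is to exploit the strict monotonicity of $-\nabla \Phi_\alpha$ established in Lemma \ref{lma:upperphi}(v) by pairing it against the right-hand side of \eqref{eq: system main for original}, which is itself a monotone linear map of $(\ba,\bb)$ owing to $p_k \geq 0$. Concretely, I would suppose $(\bc_1,\bd_1),(\bc_2,\bd_2)\in\mathcal{C}$ are two solutions of \eqref{eq: system main for original}, subtract the two systems componentwise, and take the inner product of the resulting vector identity with $(\bc_2-\bc_1,\bd_2-\bd_1)$.

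On the left-hand side this produces the quantity
\[
\langle \nabla \Phi_\alpha(\bc_2,\bd_2)-\nabla \Phi_\alpha(\bc_1,\bd_1),\,(\bc_2-\bc_1,\bd_2-\bd_1)\rangle,
\]
which by \eqref{eq:gradPhimon} is $\leq 0$, with equality only when $(\bc_1,\bd_1)=(\bc_2,\bd_2)$. On the right-hand side the zeroth component drops out (since $\partial_{a_0}\Phi_\alpha=1$ at both solutions), and for each $k\geq 1$ a direct algebraic computation, using
\[
\frac{p_k(c_{2,k}-c_{1,k})+q_k(d_{2,k}-d_{1,k})}{p_k^2+q_k^2}(c_{2,k}-c_{1,k})+\frac{p_k(d_{2,k}-d_{1,k})-q_k(c_{2,k}-c_{1,k})}{p_k^2+q_k^2}(d_{2,k}-d_{1,k}),
\]
collapses the cross terms involving $q_k$ and leaves
\[
\sum_{k=1}^{n}\frac{p_k}{p_k^2+q_k^2}\bigl((c_{2,k}-c_{1,k})^2+(d_{2,k}-d_{1,k})^2\bigr).
\]
This quantity is $\geq 0$ because $p_k\geq 0$ by \eqref{eq: >=0 coeffs}.

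Combining the two estimates forces both sides to equal zero. The strict inequality in \eqref{eq:gradPhimon} then immediately gives $(\bc_1,\bd_1)=(\bc_2,\bd_2)$, which is the claim. The main (and essentially only nontrivial) step is recognizing that the particular coefficient pattern on the right-hand side of \eqref{eq: system main for original} is precisely the one that makes the inner product with $(\bc_2-\bc_1,\bd_2-\bd_1)$ a nonnegative quadratic form — the antisymmetric $q_k$ contributions cancel, and the symmetric $p_k$ contributions survive with the right sign. Note that the argument does not require $p_k>0$ for $k\geq 1$; if $p_k=q_k=0$, that mode is simply absent from both $\Phi_\alpha$ and the system, as already remarked after Proposition \ref{thm:equiv}.
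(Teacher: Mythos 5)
Your argument is correct and coincides with the paper's own proof: both pair the strict monotonicity of $-\nabla\Phi_\alpha$ from Lemma \ref{lma:upperphi}(v) against the inner product of the difference of the two systems with $(\bc_2-\bc_1,\bd_2-\bd_1)$, observe the antisymmetric $q_k$ cross-terms cancel to leave $\sum_{k\geq 1}\tfrac{p_k}{p_k^2+q_k^2}\bigl((c_{2k}-c_{1k})^2+(d_{2k}-d_{1k})^2\bigr)\geq 0$, and conclude by the sandwich that the monotonicity pairing vanishes, forcing equality. No difference in approach to report.
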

\begin{proof} Let $(\bc_i,\bd_i)=(c_{i0},c_{i1},\cdots,c_{in},d_{i1},\cdots,d_{in})$ for $i=1,2$. Then, we have that
\begin{equation*}
\begin{cases}
\frac{\partial \Phi_{\alpha}(\bc_i,\bd_i)}{\partial a_0}&=1,\\
\frac{\partial \Phi_{\alpha}(\bc_i,\bd_i)}{\partial a_k}&=\frac{p_k}{p_k^2+q_k^2}c_{ik}+\frac{q_k}{p_k^2+q_k^2}d_{ik}\\
\frac{\partial \Phi_{\alpha}(\bc_i,\bd_i)}{\partial b_k}&=\frac{p_k}{p_k^2+q_k^2}d_{ik}-\frac{q_k}{p_k^2+q_k^2}c_{ik},\ 1\leq k \leq n,
\end{cases}
\end{equation*}
for $i=1,2$. Hence,
\begin{align*}
S:=&\langle\nabla \Phi_\alpha(\boldsymbol{c}_2,\boldsymbol{d}_2)-\nabla \Phi_\alpha(\bc_1,\bd_1), (\bc_2-\bc_1,\bd_2-\bd_1)\rangle\\
=&\sum\limits_{k=1}^{n} \frac{p_k}{p_k^2+q_k^2}(c_{2k}-c_{1k})^2+\frac{p_k}{p_k^2+q_k^2}(d_{2k}-d_{1k})^2 \geq 0.
\end{align*}
On the other hand, from \eqref{eq:gradPhimon} we have that $S\leq 0$, so $S=0$ and $(\bc_1,\bd_1)=(\bc_2,\bd_2)$.
\end{proof}

\begin{lemma}\label{lma:omega}
For every $(\ba',\bb)\in \Rr^{2n}$ there exists a unique $a_0=\omega(\ba',\bb) \in \Rr$ such that
\begin{equation}\label{eq:omega}
\frac{\partial\Phi_{\alpha}(\omega(\ba',\bb),\ba',\bb)}{\partial a_0}=1.
\end{equation}
Furthermore, $\omega \in C^{\infty}(\Rr^{2n})$.
\begin{proof}
Fix a point $(\ba',\bb)\in\Rr^{2n}$. Denote by
\begin{align*}l(\ba',\bb)&=-\inf \limits_{x\in \Tt} \left(\sum\limits_{k=1}^{n}\left(a_k\cos(2\pi k x)+b_k\sin(2\pi k x)\right)-V(x)\right)\\
&= -\sum\limits_{k=1}^{n}\left(a_k\cos(2\pi k x_0)+b_k\sin(2\pi k x_0)\right)+V(x_0),
\end{align*}
where $x_0\in \Tt$.
Firstly, we show that
\[\lim\limits_{a_0 \to l(\ba',\bb)} \frac{\partial \Phi_{\alpha}(a_0,\ba',\bb)}{\partial a_0}=\infty.
\]
Denote by
\[f(x)=a_0+\sum\limits_{k=1}^{n}\left(a_k\cos(2\pi k x)+b_k\sin(2\pi k x)\right)-V(x).
\]
Then we have that $x_0 \in \argmin f$. Hence, $f'(x_0)$=0, and
\begin{align*}
f(x)\leq f(x_0)+C(x-x_0)^2=a_0-l(\ba',\bb)+C(x-x_0)^2,
\end{align*}
where $C=C(\ba',\bb)=\frac{1}{2}\sup\limits_{x\in \Tt} |f''(x)|$. Therefore, we have that
\begin{align*}
\frac{\partial \Phi_{\alpha}(a_0,\ba',\bb)}{\partial a_0}&=c_j\int\limits_{\Tt}\frac{dx}{f(x)^{1/\alpha}}=c_j\int\limits_{x_0-1/2}^{x_0+1/2}\frac{dx}{f(x)^{1/\alpha}}\\
&\geq c_j\int\limits_{x_0-1/2}^{x_0+1/2}\frac{dx}{\left(a_0-l(\ba',\bb)+C(x-x_0)^2\right)^{1/\alpha}}\\
&=c_j\int\limits_{-1/2}^{1/2}\frac{dx}{\left(a_0-l(\ba',\bb)+Cx^2\right)^{1/\alpha}}\\
&=c_j\left(a_0-l(\ba',\bb)\right)^{\frac{1}{2}-\frac{1}{\alpha}}\int\limits_{-\frac{1}{2\sqrt{(a_0-l(\ba',\bb))}}}^{\frac{1}{2\sqrt{(a_0-l(\ba',\bb))}}}\frac{dx}{\left(1+Cx^2\right)^{1/\alpha}}.
\end{align*}
For $0<\alpha \leq 2$ we have that
\begin{equation*}
\lim\limits_{\delta \to 0} \delta^{\frac{1}{2}-\frac{1}{\alpha}}\int\limits_{-\frac{1}{2\sqrt{\delta}}}^{\frac{1}{2\sqrt{\delta}}}\frac{dx}{\left(1+Cx^2\right)^{1/\alpha}}=\infty,
\end{equation*}
so
\[\lim\limits_{a_0 \to l(\ba',\bb)} \frac{\partial \Phi_{\alpha}(a_0,\ba',\bb)}{\partial a_0}=\infty.
\]
Finally, the mapping $a_0 \mapsto \frac{\partial \Phi_{\alpha}(a_0,\ba',\bb)}{\partial a_0}$ is decreasing and
\[\lim\limits_{a_0 \to \infty} \frac{\partial \Phi_{\alpha}(a_0,\ba',\bb)}{\partial a_0}=0,
\]
so there exists unique $a_0=\omega(\ba',\bb)$ such that \eqref{eq:omega} holds.	Regularity of $\omega$ follows from the implicit function theorem.
\end{proof}
\end{lemma}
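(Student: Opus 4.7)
Fix $(\ba',\bb)\in\Rr^{2n}$ and consider the scalar function
\[
F(a_0):=\frac{\partial \Phi_\alpha}{\partial a_0}(a_0,\ba',\bb),
\]
defined on the interval $I(\ba',\bb):=(l(\ba',\bb),\infty)$, which is precisely the set of $a_0$ with $(a_0,\ba',\bb)\in\mathcal{C}$. The strategy is to show that $F$ is a smooth, strictly decreasing bijection of $I(\ba',\bb)$ onto $(0,\infty)$; existence and uniqueness of $\omega(\ba',\bb)$ then follow from the intermediate value theorem applied to $F-1$, and smoothness of $\omega$ in $(\ba',\bb)$ is obtained from the implicit function theorem, since $\partial F/\partial a_0 = \partial^2 \Phi_\alpha/\partial a_0^2$ is strictly negative by the strict concavity in Lemma \ref{lma:upperphi}(iv) (applied to the tangent direction along the $a_0$-axis in \eqref{eq:Hessianform}).

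Smoothness of $F$ in all its arguments is inherited from Lemma \ref{lma:upperphi}(ii), and strict monotonicity in $a_0$ is the same concavity statement. The limit $F(a_0)\to 0$ as $a_0\to+\infty$ is a routine dominated-convergence argument: the integrand in \eqref{eq: Phi derivatives} is dominated by $c_j/(a_0-M)^{1/\alpha}$ uniformly in $y\in\Tt$, where $M$ depends only on $(\ba',\bb)$ and $\|V\|_{C(\Tt)}$.

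The main obstacle is the opposite boundary: one must show that $F(a_0)\to+\infty$ as $a_0\downarrow l(\ba',\bb)$. At $a_0=l(\ba',\bb)$ the integrand's denominator
\[
f(y)=a_0+\sum_{k=1}^{n}\bigl(a_k\cos(2\pi ky)+b_k\sin(2\pi ky)\bigr)-V(y)
\]
vanishes at the minimiser $x_0\in\Tt$, and the blow-up rate of $F$ depends on $\alpha$. The plan is a local second-order Taylor expansion at $x_0$: since $f'(x_0)=0$ and $\|f''\|_{C(\Tt)}$ is controlled by $(\ba',\bb)$ and $V$ alone, one has $f(y)\leq (a_0-l(\ba',\bb))+C(y-x_0)^2$ on $[x_0-\tfrac12,x_0+\tfrac12]$ for a constant $C=C(\ba',\bb,V)>0$. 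Lower-bounding $F(a_0)$ by the integral over this sub-interval and performing the rescaling $y-x_0=\sqrt{a_0-l(\ba',\bb)}\,t$ reduces the claim to
\[
\lim_{\delta\to 0^+}\delta^{\frac12-\frac1\alpha}\int_{-1/(2\sqrt\delta)}^{1/(2\sqrt\delta)}\frac{dt}{(1+Ct^2)^{1/\alpha}}=+\infty\qquad\text{for every }0<\alpha\leq 2.
\]
For $\alpha<2$ the prefactor $\delta^{1/2-1/\alpha}$ diverges while the integral converges to a finite positive constant; for $\alpha=2$ the prefactor is $1$ but the integral grows logarithmically as $\delta\to 0^+$. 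This is also precisely where the standing hypothesis $\alpha\leq 2$ enters the argument: beyond this range the quadratic lower bound on $f$ would no longer force blow-up of $F$.

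With monotonicity, smoothness, and the two boundary limits of $F$ in hand, existence and uniqueness of $\omega(\ba',\bb)$ follow, and the implicit function theorem delivers $\omega\in C^\infty(\Rr^{2n})$ since $\partial F/\partial a_0$ is smooth and strictly negative at $(\omega(\ba',\bb),\ba',\bb)$.
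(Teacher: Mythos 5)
Your proposal is correct and follows essentially the same path as the paper's proof: the same Taylor-expansion bound $f(y)\leq (a_0-l)+C(y-x_0)^2$, the same rescaling to $\delta^{1/2-1/\alpha}\int_{-1/(2\sqrt\delta)}^{1/(2\sqrt\delta)}(1+Ct^2)^{-1/\alpha}\,dt$, monotonicity of $a_0\mapsto\partial\Phi_\alpha/\partial a_0$, decay to $0$ at $+\infty$, and the implicit function theorem for smoothness of $\omega$. Your additional remarks on the $\alpha<2$ versus $\alpha=2$ dichotomy and the dominated-convergence argument at $+\infty$ just spell out details the paper leaves implicit.
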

\begin{proposition}[Existence.]\label{prp: existence reduced system}
Let $F:\Rr^{2n} \to \Rr$ be the following function:
\begin{align}\label{eq:F}
F(\ba',\bb)&=\frac{1}{2}\sum\limits_{k=1}^{n}\left(\frac{\partial \Phi_{\alpha}(\omega(\ba',\bb),\ba',\bb)}{\partial a_k}-\frac{p_k}{p_k^2+q_k^2}a_{k}-\frac{q_k}{p_k^2+q_k^2}b_{k}\right)^2\\ \nonumber
&+\frac{1}{2}\sum\limits_{k=1}^{n}\left(\frac{\partial \Phi_{\alpha}(\omega(\ba',\bb),\ba',\bb)}{\partial b_k}-\frac{p_k}{p_k^2+q_k^2}b_{k}+\frac{q_k}{p_k^2+q_k^2}a_{k}\right)^2,
\end{align}
where $(\ba',\bb)=(a_1,a_2,\cdots,a_n,b_1,b_2,\cdots,b_n)$ and $\omega$ is the function from Lemma \ref{lma:omega}. Then, $F$ is bounded by below and coercive. Consequently, the minimization problem
\begin{equation}\label{eq:minF}
\min\limits_{(\ba',\bb)\in \Rr^{2n}} F(\ba',\bb)
\end{equation}
admits at least one solution.

Moreover, if $(\ba',\bb)$ is a critical point for $F$, then $(\ba,\bb)=(\omega(\ba',\bb),\ba',\bb)$ is a solution of \eqref{eq: system main for original}. Therefore, \eqref{eq: system main for original} admits at least one solution.
\end{proposition}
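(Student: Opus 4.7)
The plan is to verify the three assertions in order: boundedness below and coercivity of $F$, existence of a minimizer, and finally that every critical point of $F$ yields a solution of \eqref{eq: system main for original}.

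For boundedness below, $F\geq 0$ is immediate. For coercivity, I would exploit that, by Lemma \ref{lma:omega} and the integral formulas \eqref{eq: Phi derivatives}, the defining identity $\Phi_{\alpha,a_0}(\omega(\ba',\bb),\ba',\bb)=1$ combined with $|\cos|,|\sin|\leq 1$ yields the uniform bounds $|\Phi_{\alpha,a_k}(\omega,\ba',\bb)|,\,|\Phi_{\alpha,b_k}(\omega,\ba',\bb)|\leq 1$ for $1\leq k\leq n$. The linear terms in the $k$-th $(R_k^a,R_k^b)$-pair correspond to multiplication by the $2\times 2$ block $\frac{1}{p_k^2+q_k^2}\bigl(\begin{smallmatrix}p_k & q_k\\-q_k & p_k\end{smallmatrix}\bigr)$, a scaled rotation; hence the sum of squares of these linear parts equals exactly $(a_k^2+b_k^2)/(p_k^2+q_k^2)$. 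Applying the elementary inequality $(x-y)^2\geq y^2/2-x^2$ componentwise then gives
\[
F(\ba',\bb)\geq \frac{1}{4}\sum_{k=1}^{n}\frac{a_k^2+b_k^2}{p_k^2+q_k^2}-n,
\]
and coercivity follows because each $p_k^2+q_k^2>0$ (by the remark following Proposition \ref{thm:equiv}). Existence of a minimizer of \eqref{eq:minF} is then a standard direct-method argument: $F$ is smooth by Lemma \ref{lma:omega} and Lemma \ref{lma:upperphi} ii, and coercivity ensures that any minimizing sequence is bounded and admits a convergent subsequence whose limit attains the infimum.

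The main obstacle is the last assertion. Denote by $R=(R_1^a,\ldots,R_n^a,R_1^b,\ldots,R_n^b):\Rr^{2n}\to\Rr^{2n}$ the residual vector, so that $F=\frac{1}{2}|R|^2$; since $\nabla F=(DR)^T R$, it suffices to show that $DR$ is invertible everywhere. Writing $h=(\ba',\bb)$ and implicitly differentiating $\Phi_{\alpha,a_0}(\omega(h),h)\equiv 1$ yields $\partial_{h_l}\omega=-\Phi_{\alpha,a_0 h_l}/\Phi_{\alpha,a_0a_0}$, and the chain rule gives
\[
(DR)_{kl}=\Phi_{\alpha,h_kh_l}-\frac{\Phi_{\alpha,h_ka_0}\,\Phi_{\alpha,a_0 h_l}}{\Phi_{\alpha,a_0a_0}}-L_{kl}=\tilde H_{kl}-L_{kl},
\]
where $\tilde H$ is the Schur complement of $D^2\Phi_\alpha$ with respect to the $a_0$ variable and $L$ is the block-diagonal matrix on the right-hand side of \eqref{eq: system main for original}, whose $k$-th $2\times 2$ block is $\frac{1}{p_k^2+q_k^2}\bigl(\begin{smallmatrix}p_k & q_k\\-q_k & p_k\end{smallmatrix}\bigr)$. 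By Lemma \ref{lma:upperphi} iv, $D^2\Phi_\alpha$ is negative definite, so the standard block-$LDL^T$ inertia argument implies $\tilde H$ is negative definite as well. Moreover $L+L^T$ is block-diagonal with blocks $\frac{2p_k}{p_k^2+q_k^2}I_2$, which is positive semi-definite since $p_k\geq 0$. Therefore the symmetric part of $-DR=-\tilde H+L$ is strictly positive definite, and $DR$ is invertible. Consequently, $(DR)^T R=0$ forces $R=0$ at any critical point, and the point $(\omega(\ba'^*,\bb^*),\ba'^*,\bb^*)$ solves \eqref{eq: system main for original}.
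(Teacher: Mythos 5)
Your Part 1 (boundedness, coercivity, existence of a minimizer) is essentially identical to the paper's: same uniform bounds $|\Phi_{\alpha,a_k}|,|\Phi_{\alpha,b_k}|\leq\Phi_{\alpha,a_0}=1$, same observation that the $2\times 2$ rotation-like blocks give $(a_k^2+b_k^2)/(p_k^2+q_k^2)$, same elementary quadratic inequality, same lower bound $\frac{1}{4}\sum_k\frac{a_k^2+b_k^2}{p_k^2+q_k^2}-n$.

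For Part 2 you take a genuinely different and cleaner route. The paper tests $\nabla F=0$ against the residual vector $R$ itself and then, through a long index computation, reassembles the resulting scalar into $(\bxi,\bita)^T D^2_{\ba,\bb}\Phi_\alpha(\bxi,\bita)-\sum_k\frac{p_k}{p_k^2+q_k^2}(\xi_k^2+\eta_k^2)$ with the auxiliary component $\xi_0$ introduced by hand, and only then concludes $R=\mathbf 0$ by strict concavity. You instead observe the Gauss--Newton structure $\nabla F=(DR)^TR$ and identify $DR=\tilde H-L$, where $\tilde H$ is the Schur complement of the $a_0$ block of $D^2\Phi_\alpha$ and $L$ is the block-diagonal scaled-rotation matrix; the standard congruence/inertia fact gives $\tilde H\prec 0$, while $\tfrac12(L+L^T)$ is block $\frac{p_k}{p_k^2+q_k^2}I_2\succeq 0$, so the symmetric part of $-DR$ is positive definite and $DR$ is invertible everywhere. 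This is the same underlying inequality as the paper's, but packaged via the Schur complement so that the cancellation the paper achieves with the ad hoc $\xi_0$ term becomes automatic. Your argument also proves a slightly stronger fact (everywhere-invertibility of $DR$, not merely vanishing of $R$ at critical points), which would be needed if one wanted, say, local uniqueness or a Newton-type numerical scheme. Both proofs are correct; yours is more structural and avoids the long chain-rule expansion.
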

\begin{proof}
Firstly, we show that $F$ from \eqref{eq:F} is coercive and bounded by below. Evidently, $F\geq 0$. Next, from \eqref{eq: Phi derivatives} we have that for all $(\ba',\bb) \in \Rr^{2n}$
\begin{equation*}
\left|\frac{\partial \Phi_{\alpha}(\omega(\ba',\bb),\ba',\bb)}{\partial a_k}\right|,\ \left|\frac{\partial \Phi_{\alpha}(\omega(\ba',\bb),\ba',\bb)}{\partial b_k}\right|\leq\frac{\partial \Phi_{\alpha}(\omega(\ba',\bb),\ba',\bb)}{\partial a_0}=1
\end{equation*}
for $1\leq k \leq n$. Furthermore, we use the elementary inequality
\[(x-y)^2\geq \frac{x^2}{2}-y^2\geq \frac{x^2}{2}-1,\ \text{for}\ x \in \Rr,\ |y|\leq 1,\] and obtain
\begin{align*}
F(\ba',\bb)&\geq\frac{1}{2}\sum\limits_{k=1}^{n}\left[\frac{1}{2}\left(\frac{p_k}{p_k^2+q_k^2}a_{k}+\frac{q_k}{p_k^2+q_k^2}b_{k}\right)^2-1\right]\\ \nonumber
&+\frac{1}{2}\sum\limits_{k=1}^{n}\left[\frac{1}{2}\left(\frac{p_k}{p_k^2+q_k^2}b_{k}-\frac{q_k}{p_k^2+q_k^2}a_{k}\right)^2-1\right]\\
&=\frac{1}{4}\sum\limits_{k=1}^{n}\frac{a_k^2+b_k^2}{p_k^2+q_k^2}-n,
\end{align*}
for all $(\ba',\bb)\in \Rr^{2n}$. Therefore, $F$ is coercive.

Now, we prove that for every critical point $(\ba',\bb)$ of $F$ the point $(\omega(\ba',\bb),\ba',\bb)$ is a solution of \eqref{eq: system main for original}. For $1\leq k\leq n$ denote by
\begin{align*}
\xi_k&=\frac{\partial \Phi_{\alpha}(\omega(\ba',\bb),\ba',\bb)}{\partial a_k}-\frac{p_k}{p_k^2+q_k^2}a_{k}-\frac{q_k}{p_k^2+q_k^2}b_{k}\\ \nonumber
\eta_k&=\frac{\partial \Phi_{\alpha}(\omega(\ba',\bb),\ba',\bb)}{\partial b_k}-\frac{p_k}{p_k^2+q_k^2}b_{k}+\frac{q_k}{p_k^2+q_k^2}a_{k}.
\end{align*}
Then,
\begin{align*}
\frac{\partial F(\ba',\bb)}{\partial a_l}&=\sum\limits_{k=1}^{n}\xi_k\left(\frac{\partial^2 \Phi_{\alpha}\left(\omega(\ba',\bb),\ba',\bb\right)}{\partial a_k\partial a_l}-\frac{p_k\delta_{kl}}{p_k^2+q_k^2}+\frac{\partial^2 \Phi_{\alpha}\left(\omega(\ba',\bb),\ba',\bb\right)}{\partial a_k\partial a_0}\cdot\frac{\partial \omega}{\partial a_l}\right)\\
&+\sum\limits_{k=1}^{n}\eta_k\left(\frac{\partial^2 \Phi_{\alpha}\left(\omega(\ba',\bb),\ba',\bb\right)}{\partial b_k\partial a_l}+\frac{q_k\delta_{kl}}{p_k^2+q_k^2}+\frac{\partial^2 \Phi_{\alpha}\left(\omega(\ba',\bb),\ba',\bb\right)}{\partial b_k\partial a_0}\cdot\frac{\partial \omega}{\partial a_l}\right),\\
\frac{\partial F(\ba',\bb)}{\partial b_l}&=\sum\limits_{k=1}^{n}\xi_k\left(\frac{\partial^2 \Phi_{\alpha}\left(\omega(\ba',\bb),\ba',\bb\right)}{\partial a_k\partial b_l}-\frac{q_k\delta_{kl}}{p_k^2+q_k^2}+\frac{\partial^2 \Phi_{\alpha}\left(\omega(\ba',\bb),\ba',\bb\right)}{\partial a_k\partial a_0}\cdot\frac{\partial \omega}{\partial b_l}\right)\\
&+\sum\limits_{k=1}^{n}\eta_k\left(\frac{\partial^2 \Phi_{\alpha}\left(\omega(\ba',\bb),\ba',\bb\right)}{\partial b_k\partial b_l}-\frac{p_k\delta_{kl}}{p_k^2+q_k^2}+\frac{\partial^2 \Phi_{\alpha}\left(\omega(\ba',\bb),\ba',\bb\right)}{\partial b_k\partial a_0}\cdot\frac{\partial \omega}{\partial b_l}\right),
\end{align*}
for $1\leq l \leq n$. Next, by differentiating \eqref{eq:omega} we obtain
\begin{align*}
\frac{\partial \omega(\ba',\bb)}{\partial a_l}=-\frac{\frac{\partial^2 \Phi_{\alpha}(\omega(\ba',\bb),\ba',\bb)}{\partial a_0\partial a_l}}{\frac{\partial^2 \Phi_{\alpha}(\omega(\ba',\bb),\ba',\bb)}{\partial a_0^2}},\quad \frac{\partial \omega(\ba',\bb)}{\partial b_l}=-\frac{\frac{\partial^2 \Phi_{\alpha}(\omega(\ba',\bb),\ba',\bb)}{\partial a_0\partial b_l}}{\frac{\partial^2 \Phi_{\alpha}(\omega(\ba',\bb),\ba',\bb)}{\partial a_0^2}},
\end{align*}
for $1\leq l \leq n$.

Now, suppose $(\ba',\bb) \in \Rr^n$ is a minimizer of \eqref{eq:minF}.
Then, we have that
\begin{align}\label{eq:criticalpoint}
0&=\sum\limits_{l=1}^{n}\xi_l \frac{\partial F(\ba',\bb)}{\partial a_l}+\sum\limits_{l=1}^{n}\eta_l \frac{\partial F(\ba',\bb)}{\partial b_l}\\ \nonumber
&=\sum\limits_{l,k}\xi_l\xi_k\left(\frac{\partial^2 \Phi_{\alpha}\left(\omega(\ba',\bb),\ba',\bb\right)}{\partial a_k\partial a_l}-\frac{p_k\delta_{kl}}{p_k^2+q_k^2}+\frac{\partial^2 \Phi_{\alpha}\left(\omega(\ba',\bb),\ba',\bb\right)}{\partial a_k\partial a_0}\cdot\frac{\partial \omega}{\partial a_l}\right)\\ \nonumber
&+\sum\limits_{l,k}\xi_l\eta_k\left(\frac{\partial^2 \Phi_{\alpha}\left(\omega(\ba',\bb),\ba',\bb\right)}{\partial b_k\partial a_l}+\frac{q_k\delta_{kl}}{p_k^2+q_k^2}+\frac{\partial^2 \Phi_{\alpha}\left(\omega(\ba',\bb),\ba',\bb\right)}{\partial b_k\partial a_0}\cdot\frac{\partial \omega}{\partial a_l}\right)\\ \nonumber
&+\sum\limits_{l,k}\eta_l \xi_k \left(\frac{\partial^2 \Phi_{\alpha}\left(\omega(\ba',\bb),\ba',\bb\right)}{\partial a_k\partial b_l}-\frac{q_k\delta_{kl}}{p_k^2+q_k^2}+\frac{\partial^2 \Phi_{\alpha}\left(\omega(\ba',\bb),\ba',\bb\right)}{\partial a_k\partial a_0}\cdot\frac{\partial \omega}{\partial b_l}\right)\\ \nonumber
&+\sum\limits_{l,k}\eta_l\eta_k\left(\frac{\partial^2 \Phi_{\alpha}\left(\omega(\ba',\bb),\ba',\bb\right)}{\partial b_k\partial b_l}-\frac{p_k\delta_{kl}}{p_k^2+q_k^2}+\frac{\partial^2 \Phi_{\alpha}\left(\omega(\ba',\bb),\ba',\bb\right)}{\partial b_k\partial a_0}\cdot\frac{\partial \omega}{\partial b_l}\right)\\ \nonumber
&=(\bxi',\bita)^T D^2_{\ba',\bb} \Phi_\alpha(\omega(\ba',\bb),\ba',\bb) (\bxi',\bita)-\sum\limits_{k=1}^{n}\frac{p_k}{p_k^2+q_k^2}\left(\xi_k^2+\eta_k^2\right)\\ \nonumber
&+\sum\limits_{l,k}\left(\xi_l\xi_k \frac{\partial^2 \Phi_{\alpha}\left(\omega(\ba',\bb),\ba',\bb\right)}{\partial a_k\partial a_0}\cdot\frac{\partial \omega}{\partial a_l}+\xi_l\eta_k\frac{\partial^2 \Phi_{\alpha}\left(\omega(\ba',\bb),\ba',\bb\right)}{\partial b_k\partial a_0}\cdot\frac{\partial \omega}{\partial a_l}\right)\\ \nonumber
&+\sum\limits_{l,k}\left(\eta_l\xi_k \frac{\partial^2 \Phi_{\alpha}\left(\omega(\ba',\bb),\ba',\bb\right)}{\partial a_k\partial a_0}\cdot\frac{\partial \omega}{\partial b_l}+\eta_l\eta_k\frac{\partial^2 \Phi_{\alpha}\left(\omega(\ba',\bb),\ba',\bb\right)}{\partial b_k\partial a_0}\cdot\frac{\partial \omega}{\partial b_l}\right)\\ \nonumber
&=(\bxi',\bita)^T D^2_{\ba',\bb} \Phi_\alpha(\omega(\ba',\bb),\ba',\bb) (\bxi',\bita)-\sum\limits_{k=1}^{n}\frac{p_k}{p_k^2+q_k^2}\left(\xi_k^2+\eta_k^2\right)\\ \nonumber
&+\left(\sum\limits_{l=1}^{n}\xi_l \frac{\partial \omega}{\partial a_l}+\eta_l \frac{\partial \omega}{\partial b_l}\right)\left(\sum\limits_{k=1}^{n}\xi_k \frac{\partial \Phi_{\alpha}(\omega(\ba',\bb),\ba',\bb)}{\partial a_k \partial a_0}+\eta_k \frac{\partial \Phi_{\alpha}(\omega(\ba',\bb),\ba',\bb)}{\partial b_k \partial a_0}\right)\\ \nonumber
&=(\bxi',\bita)^T D^2_{\ba',\bb} \Phi_\alpha(\omega(\ba',\bb),\ba',\bb) (\bxi',\bita)-\sum\limits_{k=1}^{n}\frac{p_k}{p_k^2+q_k^2}\left(\xi_k^2+\eta_k^2\right)\\ \nonumber
&-\frac{\partial^2 \Phi_{\alpha}(\omega(\ba',\bb),\ba',\bb)}{\partial a_0^2}\left(\sum\limits_{l=1}^{n}\xi_l \frac{\frac{\partial^2 \Phi_{\alpha}(\omega(\ba',\bb),\ba',\bb)}{\partial a_0\partial a_l}}{\frac{\partial^2 \Phi_{\alpha}(\omega(\ba',\bb),\ba',\bb)}{\partial a_0^2}}+\eta_l \frac{\frac{\partial^2 \Phi_{\alpha}(\omega(\ba',\bb),\ba',\bb)}{\partial a_0\partial b_l}}{\frac{\partial^2 \Phi_{\alpha}(\omega(\ba',\bb),\ba',\bb)}{\partial a_0^2}}\right)^2\\ \nonumber
&=(\bxi',\bita)^T D^2_{\ba',\bb} \Phi_\alpha(\omega(\ba',\bb),\ba',\bb) (\bxi',\bita)-\sum\limits_{k=1}^{n}\frac{p_k}{p_k^2+q_k^2}\left(\xi_k^2+\eta_k^2\right)\\ \nonumber
&-\frac{\partial^2 \Phi_{\alpha}(\omega(\ba',\bb),\ba',\bb)}{\partial a_0^2} \xi_0^2,
\end{align}
where $\bxi'=(\xi_1,\cdots,\xi_n),\ \bita=(\eta_1,\cdots,\eta_n)$, and
\begin{equation*}
\xi_0=-\sum\limits_{l=1}^{n}\left(\xi_l \frac{\frac{\partial^2 \Phi_{\alpha}(\omega(\ba',\bb),\ba',\bb)}{\partial a_0\partial a_l}}{\frac{\partial^2 \Phi_{\alpha}(\omega(\ba',\bb),\ba',\bb)}{\partial a_0^2}}+\eta_l \frac{\frac{\partial^2 \Phi_{\alpha}(\omega(\ba',\bb),\ba',\bb)}{\partial a_0\partial b_l}}{\frac{\partial^2 \Phi_{\alpha}(\omega(\ba',\bb),\ba',\bb)}{\partial a_0^2}}\right).
\end{equation*}
Furthermore, we have that
\begin{align*}
\frac{\partial^2 \Phi_{\alpha}(\omega(\ba',\bb),\ba',\bb)}{\partial a_0^2}\xi_0^2&+2 \sum\limits_{l=1}^{n} \left(\xi_0 \xi_l \frac{\partial^2 \Phi_{\alpha}(\omega(\ba',\bb),\ba',\bb)}{\partial a_0 \partial a_l} +\xi_0 \eta_l \frac{\partial^2 \Phi_{\alpha}(\omega(\ba',\bb),\ba',\bb)}{\partial a_0 \partial b_l}\right)\\
&=\frac{\partial^2 \Phi_{\alpha}(\omega(\ba',\bb),\ba',\bb)}{\partial a_0^2}\xi_0^2-2\frac{\partial^2 \Phi_{\alpha}(\omega(\ba',\bb),\ba',\bb)}{\partial a_0^2}\xi_0^2\\
&=-\frac{\partial^2 \Phi_{\alpha}(\omega(\ba',\bb),\ba',\bb)}{\partial a_0^2}\xi_0^2.
\end{align*}
Hence, from \eqref{eq:criticalpoint} we obtain that
\begin{align*}
0&=(\bxi',\bita)^T D^2_{\ba',\bb} \Phi_\alpha(\omega(\ba',\bb),\ba',\bb) (\bxi',\bita)-\sum\limits_{k=1}^{n}\frac{p_k}{p_k^2+q_k^2}\left(\xi_k^2+\eta_k^2\right)\\ \nonumber
&-\frac{\partial^2 \Phi_{\alpha}(\omega(\ba',\bb),\ba',\bb)}{\partial a_0^2} \xi_0^2\\
&=(\bxi,\bita)^T D^2_{\ba,\bb} \Phi_\alpha(\omega(\ba',\bb),\ba',\bb) (\bxi,\bita)-\sum\limits_{k=1}^{n}\frac{p_k}{p_k^2+q_k^2}\left(\xi_k^2+\eta_k^2\right)\\
&\leq (\bxi,\bita)^T D^2_{\ba,\bb} \Phi_\alpha(\omega(\ba',\bb),\ba',\bb) (\bxi,\bita),
\end{align*}
where $\bxi=(\xi_0,\bxi')$. On the other hand, from \eqref{eq:Hessianform} we have that
\begin{align*}
(\bxi,\bita)^T D^2_{\ba,\bb} \Phi_\alpha(\omega(\ba',\bb),\ba',\bb) (\bxi,\bita) \leq 0.
\end{align*}
Therefore, there is an equality in the previous equation and from \eqref{eq:Hessianform} we obtain that $(\bxi,\bita)=\textbf{0}$ or that, equivalently, $\xi_k=\eta_k=0$ for $1\leq k\leq n$. The latter precisely means that $(\omega(\ba',\bb),\ba',\bb)$ is a solution of \eqref{eq: system main for original}.
\end{proof}
Now we are in the position to prove Theorem \ref{thm: free for trig}.
\begin{proof}[Proof of the Theorem \ref{thm: free for trig}]
We have that \eqref{eq: main current} is equivalent to \eqref{eq: system main for original} by Proposition \ref{thm:equiv}. Furthermore, \eqref{eq: system main for original} admits a solution $(\ba^*,\bb^*)$ by Theorem \ref{prp: existence reduced system}. Moreover, $(\ba^*,\bb^*)$ is unique by Proposition \ref{prp: system reduced uniqueness}. Hence, \eqref{eq: main current} admits a unique solution given by \eqref{eq: ansatz m} and \eqref{eq: H=a0-p0}.
\end{proof}
Next, we prove Corollary \ref{crl:sym_case}.
\begin{proof}[Proof of the Corollary \ref{crl:sym_case}]
By Theorem \ref{thm: free for trig} we have that \eqref{eq: main current} obtains unique solution, $(m,\Hh)$, given by \eqref{eq: ansatz m} and \eqref{eq: H=a0-p0}, where $(\ba^*,\bb^*)=(a_0^*,a_1^*,\cdots,a_n^*,b_1^*\cdots,b_n^*)$ is the unique solution of \eqref{eq: system main for original}. Since $G$ has the form \eqref{eq: G_rep sym} we have that $q_k=0$ for $1\leq k \leq n$. Therefore, \eqref{eq: system main for original} can be written as
\begin{equation*}
\nabla_{\ba,\bb}\left(\Phi_{\alpha}(\ba,\bb)-a_0-\sum\limits_{k=1}^{n}\frac{1}{2p_k}\left(a_k^2+b_k^2\right)\right)\Bigg\rvert_{(\ba,\bb)=(\ba^*,\bb^*)}=0.
\end{equation*}
Furthermore, by Lemma \ref{lma:upperphi} $\Phi_{\alpha}$ is strictly concave on $\mathcal{C}$ (see \eqref{eq: C} for the definition of $\mathcal{C}$), so the function
\begin{equation*}
(\ba,\bb)\mapsto\Phi_{\alpha}(\ba,\bb)-a_0-\sum\limits_{k=1}^{n}\frac{1}{2p_k}\left(a_k^2+b_k^2\right)
\end{equation*}
is also strictly concave on $\mathcal{C}$. Hence, $(\ba^*,\bb^*)$ is the unique maximum of \eqref{eq: variational}.
\end{proof}
Finally, we prove Theorem \ref{prp: alpha=1}.
\begin{proof}[Proof of the Theorem \ref{prp: alpha=1}]
Firstly, note that if $m$ is a solution of \eqref{eq: main current} with $\alpha=1$, then $m$ must necessarily have the form \eqref{eq: 44}. Consequently, \eqref{eq: 44} leads to
\begin{align}\label{eq: 45}
&\frac{j^2}{2}(a_0+a_1\cos(2\pi x)+b_1\sin(2\pi x))+v_0+v_1\cos(2\pi x)+w_1\sin (2\pi x)\\ \notag
=&p_0\int\limits_{\Tt} \frac{dy}{a_0+a_1\cos(2\pi y)+b_1\sin(2\pi y)}+p_1\int\limits_{\Tt} \frac{\cos(2\pi(x-y))dy}{a_0+a_1\cos(2\pi y)+b_1\sin(2\pi y)}\\ \notag
&+q_1\int\limits_{\Tt} \frac{\sin(2\pi(x-y))dy}{a_0+a_1\cos(2\pi y)+b_1\sin(2\pi y)}.
\end{align}
A direct calculation yields the following identities
\begin{align}\label{eq: 46}
\int\limits_{\Tt} \frac{dy}{a_0+a_1\cos(2\pi y)+b_1\sin(2\pi y)}&=\frac{1}{\sqrt{a_0^2-a_1^2-b_1^2}}\\ \notag
\int\limits_{\Tt} \frac{\cos(2\pi y)dy}{a_0+a_1\cos(2\pi y)+b_1\sin(2\pi y)}&=\frac{a_1(\sqrt{a_0^2-a_1^2-b_1^2}-a_0)}{(a_1^2+b_1^2)\sqrt{a_0^2-a_1^2-b_1^2}}\\ \notag
\int\limits_{\Tt} \frac{\sin(2\pi y)dy}{a_0+a_1\cos(2\pi y)+b_1\sin(2\pi y)}&=\frac{b_1(\sqrt{a_0^2-a_1^2-b_1^2}-a_0)}{(a_1^2+b_1^2)\sqrt{a_0^2-a_1^2-b_1^2}}\\ \notag
\end{align}
Using \eqref{eq: 46} in \eqref{eq: 45} and taking into the account that $\int\limits_{\Tt}m(x)dx=1$, we obtain
\begin{equation*}
\begin{cases}
\frac{1}{\sqrt{a_0^2-a_1^2-b_1^2}}&=1\\
\frac{j^2}{2}a_0+v_0&=\frac{p_0}{\sqrt{a_0^2-a_1^2-b_1^2}}+\Hh\\
\frac{j^2}{2}a_1+v_1&=\frac{(\sqrt{a_0^2-a_1^2-b_1^2}-a_0)}{(a_1^2+b_1^2)\sqrt{a_0^2-a_1^2-b_1^2}}(p_1a_1-q_1b_1)\\
\frac{j^2}{2}b_1+w_1&=\frac{(\sqrt{a_0^2-a_1^2-b_1^2}-a_0)}{(a_1^2+b_1^2)\sqrt{a_0^2-a_1^2-b_1^2}}(p_1b_1+q_1a_1).
\end{cases}
\end{equation*}
which can be equivalently written as
\begin{equation*}
\begin{cases}
\Hh&=\frac{j^2}{2}a_0+v_0-p_0\\
\frac{j^2}{2}a_1+v_1&=\frac{(1-a_0)}{(a_0^2-1)}(p_1a_1-q_1b_1)\\
\frac{j^2}{2}b_1+w_1&=\frac{(1-a_0)}{(a_0^2-1)}(p_1b_1+q_1a_1)\\
a_0^2-a_1^2-b_1^2&=1.
\end{cases}
\end{equation*}
We eliminate $a_1$ and $b_1$ in the second and third equations and find $a_0$ from the fourth equation. It is algebraically more appealing to put $a_0=2r-1$. Then, a straightforward calculation yields \eqref{eq: r}.
	
Since $m>0$ we have that $a_0>0$. Moreover, from the fourth equation in the previous system we have that $a_0^2\geq 1$, so $a_0\geq 1$ or $r\geq 1$. Note that the left-hand-side of \eqref{eq: r} is increasing function in $r$ for $r\geq 1$, and it is equal to 0 at $r=1$ and to $\infty$ at $r=\infty$. Therefore, for arbitrary choices of $v_1,w_1$ there is a unique $r\geq 1$ such that \eqref{eq: r} holds. This is coherent with the fact that \eqref{eq: main current} obtains a unique smooth solution. Moreover, \eqref{eq: r} is a cubic equation. Hence, formulas in \eqref{eq: alpha=1} are explicit.
\end{proof}

\section{$G$ is a general kernel}\label{sec:stability}

In this section we prove Theorem \ref{thm: main general}. We divide the proof into two steps. First, we prove that solutions of \eqref{eq: main current} are stable under $C^2$ perturbation of the kernel. Second, we show that arbitrary $C^2$ kernel can be approximated by suitable trigonometric polynomials.
\begin{proof}[Proof of the Theorem \ref{thm: main general}]
The uniqueness of the solution for \eqref{eq: main current} follows from the uniqueness of the solution of \eqref{eq: main} (See \cite{LCDF}).\\
\textbf{Part 1. Stability.} Suppose that $\{G_n\}_{n\in \Nn} \subset C^2(\Tt)$ are such that
\[\lim\limits_{n\to \infty} \|G_n-G\|_{C^2(\Tt)}=0.\]
Moreover, assume that for each $n\geq 1$ \eqref{eq: main current} has a solution, $(m_n,\Hh_n) \in C^2(\Tt) \times \Rr$, corresponding to the kernel $G_n$. We aim to prove that there exists $(m,\Hh) \in C^2(\Tt)\times \Rr$ such that \eqref{eq: 23} holds and $(m,\Hh)$ is the solution of \eqref{eq: main current} corresponding to the kernel $G$.
\begin{remark}
Note that in this part of the proof we do not assume that $\{G_n\}$ are trigonometric polynomials and that they satisfy \eqref{eq: G_mon}, \eqref{eq: G weight}. We need these assumptions in the second part of the proof to guarantee the existence of solutions $(m_n,\Hh_n)$.
\end{remark}
We are going to show that families
\begin{equation*}
\left\{m_n\right\}_{n\in\Nn},\quad \left\{\frac{1}{m_n^{\alpha}}\right\}_{n\in\Nn}
\end{equation*}
are uniformly bounded and equicontinuous. Denote by
\begin{align*}
m_n(x)=\frac{(j^2/2)^{1/\alpha}}{f_n(x)^{1/\alpha}},
\end{align*}
where
\begin{align*}
f_n(x)=\int\limits_{\Tt} G_n(x-y)m_n(y)dy+\Hh_n-V(x),\quad x \in \Tt.
\end{align*}
We have that
\begin{align*}
|f_n''(x)|&=\left|\int\limits_{\Tt} G_n''(x-y)m_n(y)dy-V''(x)\right|\leq \sup \limits_{\Tt} |G_n''| \int\limits_{\Tt} m_n(y)dy+\sup \limits_{\Tt} |V''|\\
&\leq\sup\limits_n \sup \limits_{\Tt} |G_n''| +\sup \limits_{\Tt} |V''|=:2C,\quad x \in \Tt.
\end{align*}
Next, denote by $\sigma_n:=\min \limits_{\Tt} f_n=f_n(x_n),$
for some $x_n \in \Tt$. Then, we have that $f'(x_n)=0$, and
\begin{align*}
f_n(x)\leq \sigma_n+C(x-x_n)^2,\quad x \in \Tt.
\end{align*}
Therefore,
\begin{align*}
1&= \int\limits_{\Tt} m_n(x)dx=(j^2/2)^{1/\alpha} \int\limits_{\Tt}\frac{dx}{f_n(x)^{1/\alpha}}=(j^2/2)^{1/\alpha}\int\limits_{x_n-1/2}^{x_n+1/2}\frac{dx}{f_{n}(x)^{1/\alpha}}\\
&\geq (j^2/2)^{1/\alpha}\int\limits_{x_n-1/2}^{x_n+1/2}\frac{dx}{\left(\sigma_{n}+C(x-x_{n})^2\right)^{1/\alpha}}=(j^2/2)^{1/\alpha}\int\limits_{-1/2}^{1/2}\frac{dx}{\left(\sigma_{n}+Cx^2\right)^{1/\alpha}}\\
&=(j^2/2)^{1/\alpha}\sigma_n^{\frac{1}{2}-\frac{1}{\alpha}}\int\limits_{-\frac{1}{2\sqrt{\sigma_n}}}^{\frac{1}{2\sqrt{\sigma_n}}}\frac{dx}{\left(1+Cx^2\right)^{1/\alpha}}.
\end{align*}
Furthermore, for $0<\alpha\leq 2$ we have that
\begin{equation*}
\lim\limits_{\delta \to 0} \delta^{\frac{1}{2}-\frac{1}{\alpha}}\int\limits_{-\frac{1}{2\sqrt{\delta}}}^{\frac{1}{2\sqrt{\delta}}}\frac{dx}{\left(1+Cx^2\right)^{1/\alpha}}=\infty.
\end{equation*}
Therefore, $\sigma_n\geq \delta_0>0,$
or
\begin{equation}\label{eq: 37}
m_n(x)=\frac{(j^2/2)^{1/\alpha}}{f_n(x)^{1/\alpha}}\leq \frac{(j^2/2)^{1/\alpha}}{\sigma_{n}^{1/\alpha}}\leq \frac{(j^2/2)^{1/\alpha}}{\delta_0^{1/\alpha}}=:C_1,\quad x \in \Tt,
\end{equation}
for $n \geq 1$. Furthermore, denote by $m_n(z_n)=\max \limits_{\Tt} m_n.$ Then, we have that
\begin{equation}\label{eq: 35}
m_n(z_n)\geq \int\limits_{\Tt} m_n(x)dx=1.
\end{equation}
Furthermore, for every $x,z \in \Tt$ we have that
\begin{align}\label{eq: 34}
\left|\frac{j^2}{2m_n^{\alpha}(x)}-\frac{j^2}{2m_n^{\alpha}(z)}\right|&=\left|\int\limits_{\Tt} (G_n(x-y)-G_n(z-y))m_n(y)dy-V(x)+V(z)\right|\\ \notag
&\leq \left(\sup\limits_{\Tt}|G'_n|+\sup\limits_{\Tt}|V'|\right)|x-z|\\ \notag
&\leq \left(\sup\limits_{n,\Tt}|G'_n|+\sup\limits_{\Tt}|V'|\right)|x-z|.
\end{align}
Firstly, if we plug in $z=z_n$ in \eqref{eq: 34} and use \eqref{eq: 35}, we get that
\begin{equation*}
\frac{1}{m_n^{\alpha}(x)}\leq C_2,\quad x \in \Tt,
\end{equation*}
for all $n\geq 1$. Secondly, \eqref{eq: 34} yields that the family
\[\left\{\frac{1}{m_n^{\alpha}}\right\}_{n\in\Nn}
\]
is uniformly Lipschitz which in turn yields (in combination with \eqref{eq: 37}) that the family $\left\{m_n^{\alpha}\right\}_{n\in\Nn}$ is also uniformly Lipschitz.

Since families \eqref{eq: 23} are uniformly bounded, we get that $\{\Hh_n\}_{n \in \Nn}$ is a bounded sequence. Then, we can assume that there exists $(m,\Hh) \in C(\Tt)\times \Rr$ such that
\begin{align*}
\lim \limits_{n\to \infty} \|m_n-m\|_{C(\Tt)}&=0,\\
\lim \limits_{n\to \infty} \left\|\frac{1}{m_n^\alpha}-\frac{1}{m^\alpha}\right\|_{C(\Tt)}&=0,\\
\lim \limits_{n\to \infty} (\Hh_n-\Hh)&=0,
\end{align*}
through a subsequence. Moreover, we obtain \eqref{eq: 23} through the same subsequence.

From the previous equations, we obtain that $(m,\Hh)$ solves \eqref{eq: main current} for the kernel $G$. Next, \eqref{eq: main current} must have a unique solution because it is equivalent to \eqref{eq: main} that can have at most one solution (see \cite{LCDF}). Hence, the limit, $(m,\Hh)$, is the same for all subsequences. Therefore, \eqref{eq: 23} is valid through the whole sequence.\\
\textbf{Part 2. Approximation.} Suppose $G \in C^2(\Tt)$ satisfies \eqref{eq: G_mon} and \eqref{eq: G weight} are satisfied. We formally expand $G$ in Fourier series 
\begin{align*}
G(x)=p_0+\sum\limits_{k=1}^{\infty} p_k \cos(2\pi k x)+q_k \sin(2\pi k x),\quad x \in \Tt.
\end{align*}
Denote by 
\begin{align*}
S_n(x)=p_0+\sum\limits_{k=1}^{n} p_k \cos(2\pi k x)+q_k \sin(2\pi k x),\quad x \in \Tt,\quad n\geq 1,
\end{align*}
and $S_0(x)=p_0$ the truncated Fourier series. Furthermore, let $G_n$ be the corresponding Ces\`{a}ro mean; that is,
\begin{equation*}
G_n(x)=\frac{1}{n+1}\sum\limits_{k=0}^{n} S_k(x)=p_0^n+\sum\limits_{k=1}^{n} p_k^n \cos(2\pi k x)+q_k^n \sin(2\pi k x).
\end{equation*}
Then by Fej\'{e}r's theorem (see Theorem 1.10 in \cite{javier'01}) we have that
\begin{equation*}
\lim\limits_{n\to \infty} \|G_n-G\|_{C^2(\Tt)}=0.
\end{equation*}
Next, $G$ satisfies \eqref{eq: G_mon}, \eqref{eq: G weight} so $p_0>0$ and $p_k \geq 0$ for $k \geq 1$. Therefore, we have that
\begin{align*}
p_0^n&=p_0>0,\\ \notag
p_k^n&=\frac{n+1-k}{n+1}p_k\geq 0, \quad 1\leq k\leq n,
\end{align*}
so $G_n$ also satisfy \eqref{eq: G weight}, \eqref{eq: G_mon} for all $n\geq 1$.

Now, we can complete the proof of Theorem \ref{thm: main general}. We approximate $G$ using Part 2 and conclude using Part 1.
\end{proof}

\section{Numerical solutions}\label{sec:num}

Here, we numerically solve \eqref{eq: main current} for different types of kernels $G$. We present three cases. First, we consider $G$ that is a non-symmetric trigonometric polynomial. Second, we consider $G$ that is a symmetric trigonometric polynomial. And third, we consider $G$ that is periodic but that is not a trigonometric polynomial.

During the whole discussion in this section we assume that
\begin{equation*}
\begin{cases}
V(x)=2 \sin\left(2\pi (x+\frac{1}{4})\right),\ x\in \Tt,\\
\alpha=1.5,\ j=\sqrt{2}.
\end{cases}
\end{equation*}
This choice of parameters in \eqref{eq: main current} is random and robustness of our calculations does not depend on a particular choice of parameters.
\subsection{The case of a non-symmetric trigonometric polynomial}
By Theorem \ref{thm: free for trig} we have that for a given non-symmetric trigonometric polynomial $G$ the solution $m$ of \eqref{eq: main current} has the form \eqref{eq: ansatz m}, where the vector $(a_0^*,a_1^*,\cdots,a_n^*,b_1^*,\cdots,b_n^*)$ is the unique solution of \eqref{eq: system main for original}. Furthermore, we define
\begin{align*}
M(\ba,\bb)&=\left(\frac{\partial \Phi_{\alpha}(\ba,\bb)}{\partial a_0}-1\right)^2+\frac{1}{2}\sum\limits_{k=1}^{n}\left(\frac{\partial \Phi_{\alpha}(\ba,\bb)}{\partial a_k}-\frac{p_k}{p_k^2+q_k^2}a_{k}-\frac{q_k}{p_k^2+q_k^2}b_{k}\right)^2\\ \nonumber
&+\frac{1}{2}\sum\limits_{k=1}^{n}\left(\frac{\partial \Phi_{\alpha}(\ba,\bb)}{\partial b_k}-\frac{p_k}{p_k^2+q_k^2}b_{k}+\frac{q_k}{p_k^2+q_k^2}a_{k}\right)^2,
\end{align*}
where $(\ba,\bb)=(a_0,a_1,\cdots,a_n,b_1,\cdots,b_n) \in \mathcal{C}$. Then, solutions of \eqref{eq: system main for original} coincide with minimums of $M$. Accordingly, we find the solution of \eqref{eq: system main for original} by numerically solving the optimization problem
\begin{align}\label{eq:minM}
\min\limits_{(\ba,\bb)\in \mathcal{C}} M(\ba,\bb).
\end{align}
We devise our algorithm in Wolfram Mathematica\textsuperscript{\textregistered} language and use the built-in optimization function \texttt{FindMinimum} to solve \eqref{eq:minM}.

As an example, we consider the kernel
\begin{align*}
G_1(x)=1+4 \cos(2\pi x)- 5 \sin(2\pi x)+\cos(4\pi x)-2 \sin(4\pi x),\quad x\in \Tt.
\end{align*}
We denote by $(\widetilde{u}_1,\widetilde{m}_1,\widetilde{H}_1)$ the corresponding numerical solution of \eqref{eq: main}. We first find $(\widetilde{m}_1,\widetilde{H}_1)$ by solving \eqref{eq:minM} and using \eqref{eq: ansatz m} and \eqref{eq: H=a0-p0}. Next, we use \eqref{eq: reduction} to find $\widetilde{u}_1$.

Finally, to estimate the accuracy of numerical solutions we introduce the error function
\begin{align*}
\mathrm{Er}_1(x)&=\left|\frac{j^2}{2\widetilde{m}_1(x)^{\alpha}}+V(x)-\int\limits_{\Tt} G_1(x-y)\widetilde{m}_1(y)dy-\widetilde{H}_1\right|\\ \nonumber
&+\left|\int\limits_{\Tt}\widetilde{m}_1(y)dy-1\right|,\ x\in \Tt.
\end{align*} 

We plot $G_1$ and $V$ in Fig. \ref{fig:G1andV}, $\widetilde{m}_1$ and $\widetilde{u}_1$ in Fig. \ref{fig:u1andm1} and $\mathrm{Er}_1$ in Fig. \ref{fig:Er1}.

\begin{figure}
\includegraphics[width=10cm]{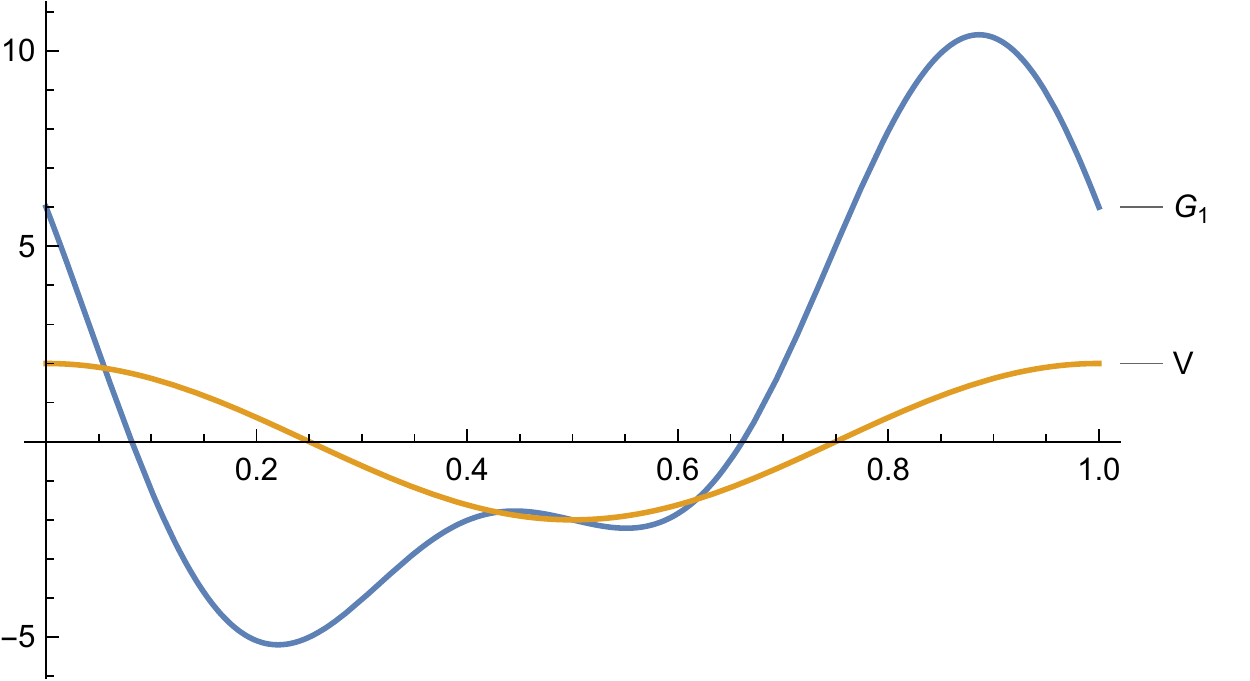}
\caption{The kernel $G_1$ and the potential $V$.}
\label{fig:G1andV}
\end{figure}

\begin{figure}
	\includegraphics[width=10cm]{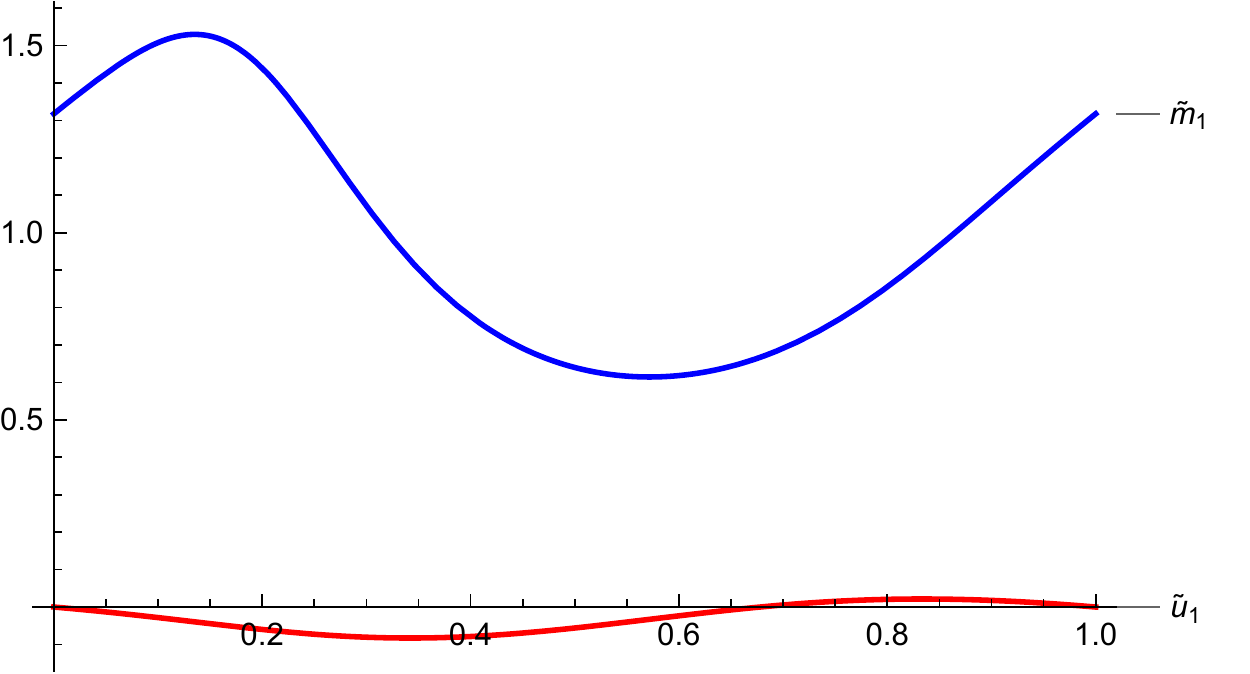}
\caption{The approximate solutions $\widetilde{m}_1$ and $\widetilde{u}_1$.}
\label{fig:u1andm1}
\end{figure}

\begin{figure}
\includegraphics[width=10cm]{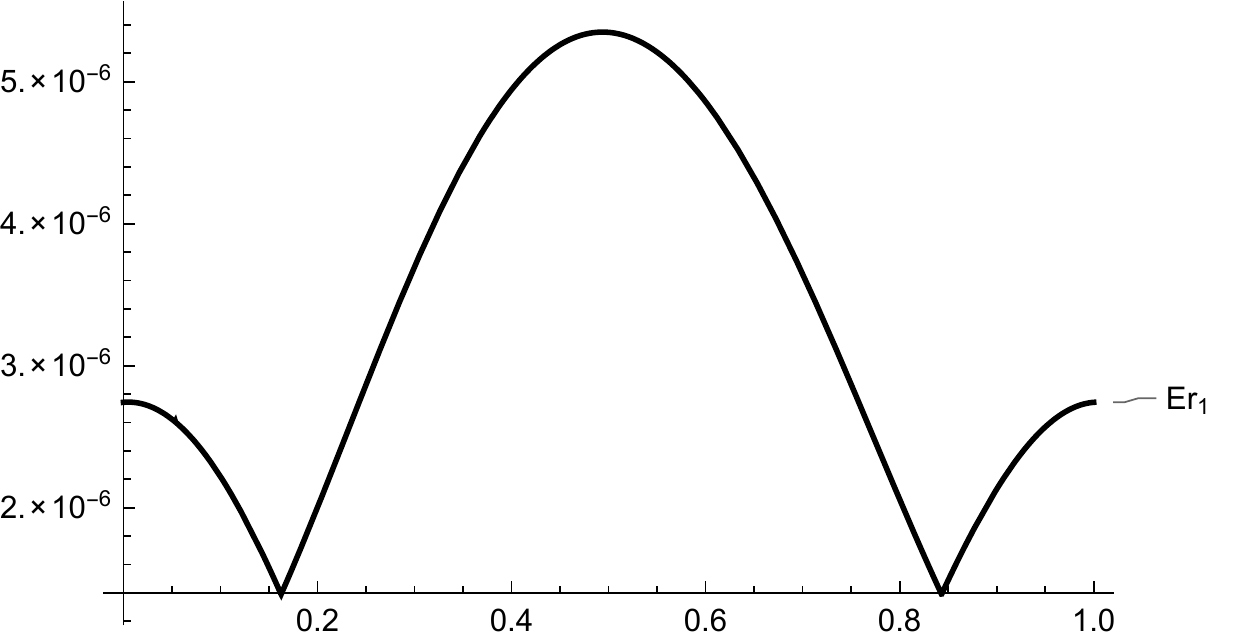}
\caption{The error $\mathrm{Er}_1$.}
\label{fig:Er1}
\end{figure}

\subsection{The case of a symmetric trigonometric polynomial}

By Corollary \ref{crl:sym_case} we have that for a given symmetric trigonometric polynomial $G$ the solution $m$ of \eqref{eq: main current} has the form \eqref{eq: ansatz m}, where the vector $(a_0^*,a_1^*,\cdots,a_n^*,b_1^*,\cdots,b_n^*)$ is the unique solution of \eqref{eq: variational}. As before, we use \texttt{FindMinimum} to solve \eqref{eq: variational}.

As an example, we consider the kernel
\begin{align*}
G_2(x)=1+4 \cos(2\pi x)+\cos(4\pi x)+5 \cos(6\pi x)+7 \cos (8 \pi x),\quad x\in \Tt.
\end{align*}
Analogous to the previous case we denote by $(\widetilde{u}_2,\widetilde{m}_2,\widetilde{H}_2)$ the numerical solution of \eqref{eq: main} corresponding to $G_2$. Furthermore, we denote by $\mathrm{Er}_2$ the error function corresponding to $(\widetilde{u}_2,\widetilde{m}_2,\widetilde{H}_2)$. We plot $G_2$ and $V$ in Fig. \ref{fig:G2andV}, $\widetilde{m}_2$ and $\widetilde{u}_2$ in Fig. \ref{fig:u2andm2} and $\mathrm{Er}_2$ in Fig. \ref{fig:Er2}.

\begin{figure}
\includegraphics[width=10cm]{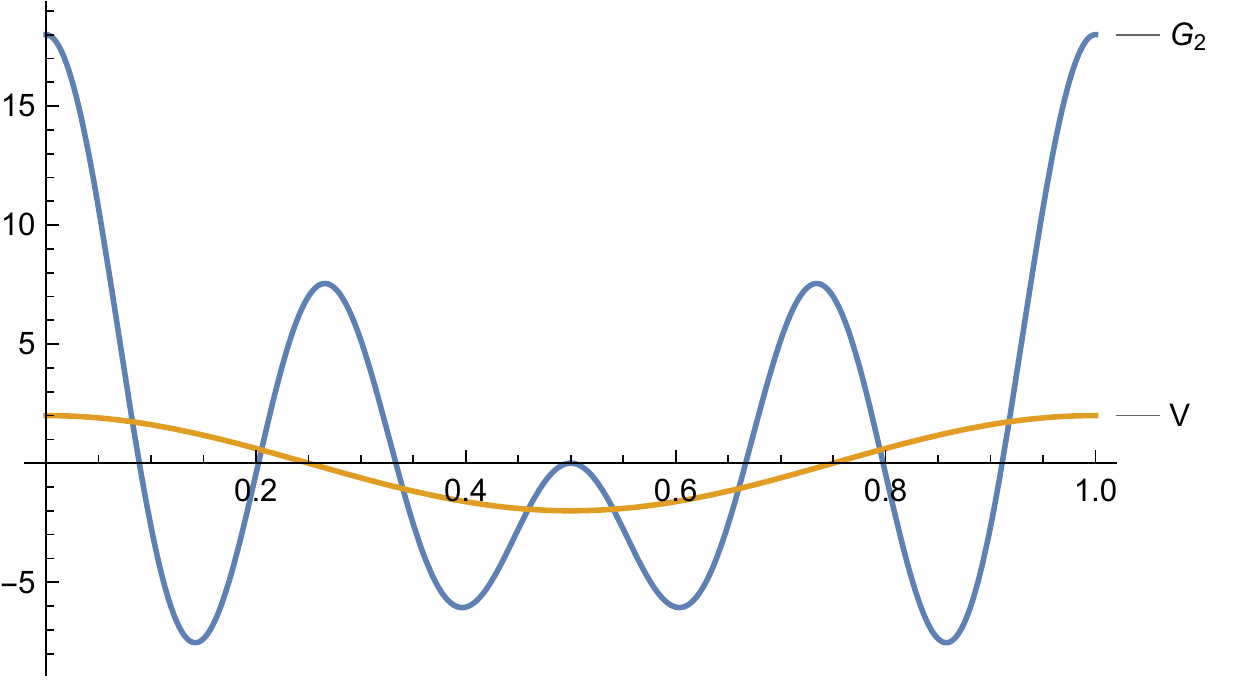}
\caption{The kernel $G_2$ and the potential $V$.}
\label{fig:G2andV}
\end{figure}

\begin{figure}
\includegraphics[width=10cm]{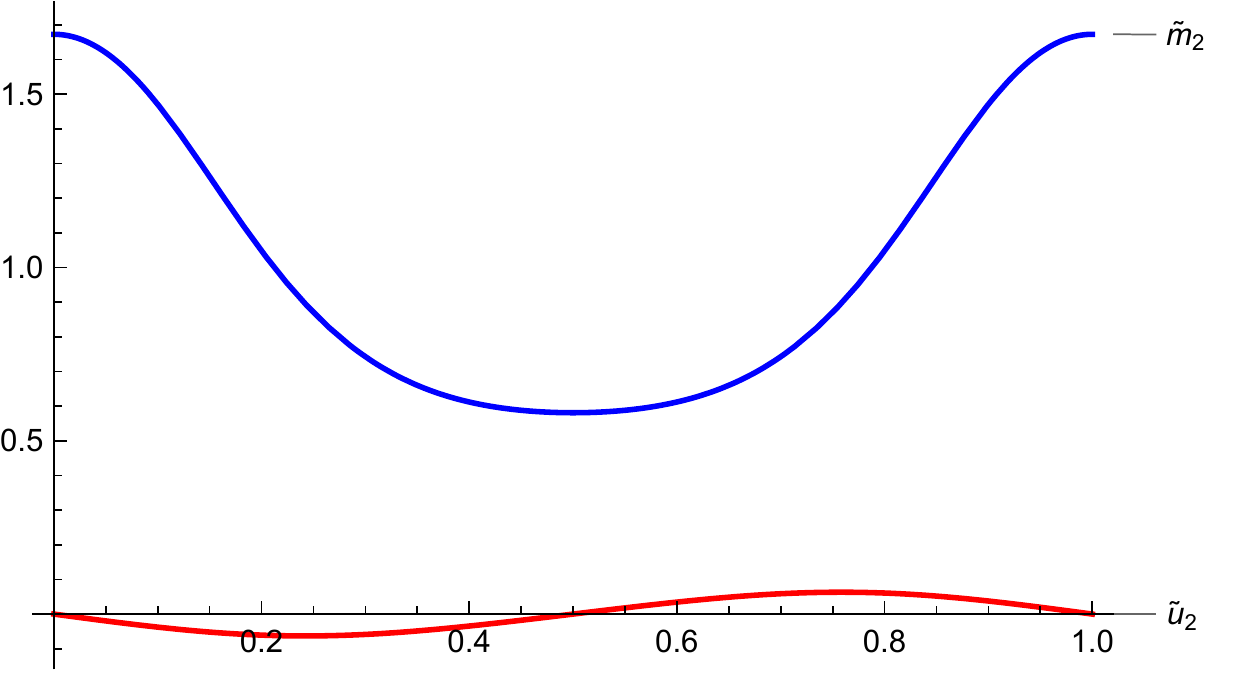}
\caption{The approximate solutions $\widetilde{m}_2$ and $\widetilde{u}_2$.}
\label{fig:u2andm2}
\end{figure}

\begin{figure}
\includegraphics[width=10cm]{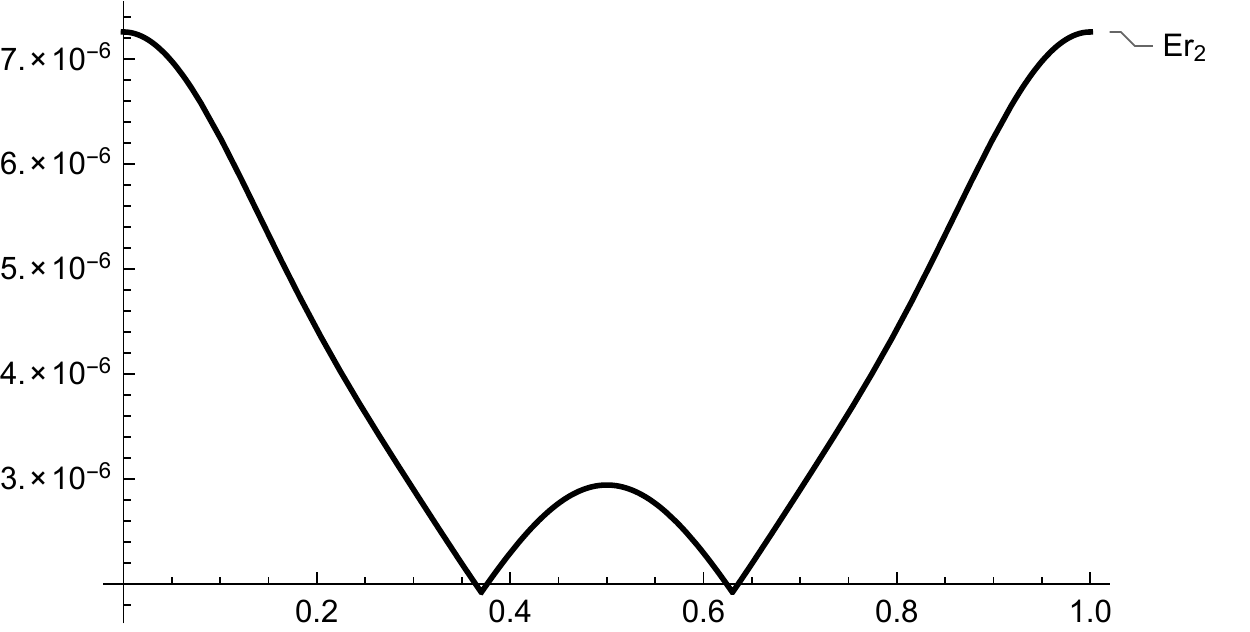}
\caption{The error $\mathrm{Er}_2$.}
\label{fig:Er2}
\end{figure}

\subsection{The case of a non trigonometric polynomial}

If $G$ is not a trigonometric polynomial we first approximate it by its truncated Fourier series and then apply one of the previous solution methods. As an example we take
\begin{align*}
G_3(x)=\frac{2-\cos(2\pi x)+\sin (2 \pi x)}{5-4\cos(2 \pi x)},\quad x\in \Tt.
\end{align*}
As before, we denote by $(\widetilde{u}_3,\widetilde{m}_3,\widetilde{H}_3)$ and $\mathrm{Er}_3$ the numerical solution of \eqref{eq: main} and the error function corresponding to $G_3$, respectively. We plot $G_3$ and $V$ in Fig. \ref{fig:G3andV}, $\widetilde{m}_3$ and $\widetilde{u}_3$ in Fig. \ref{fig:u3andm3} and $\mathrm{Er}_3$ in Fig. \ref{fig:Er3}.

\begin{figure}
\includegraphics[width=10cm]{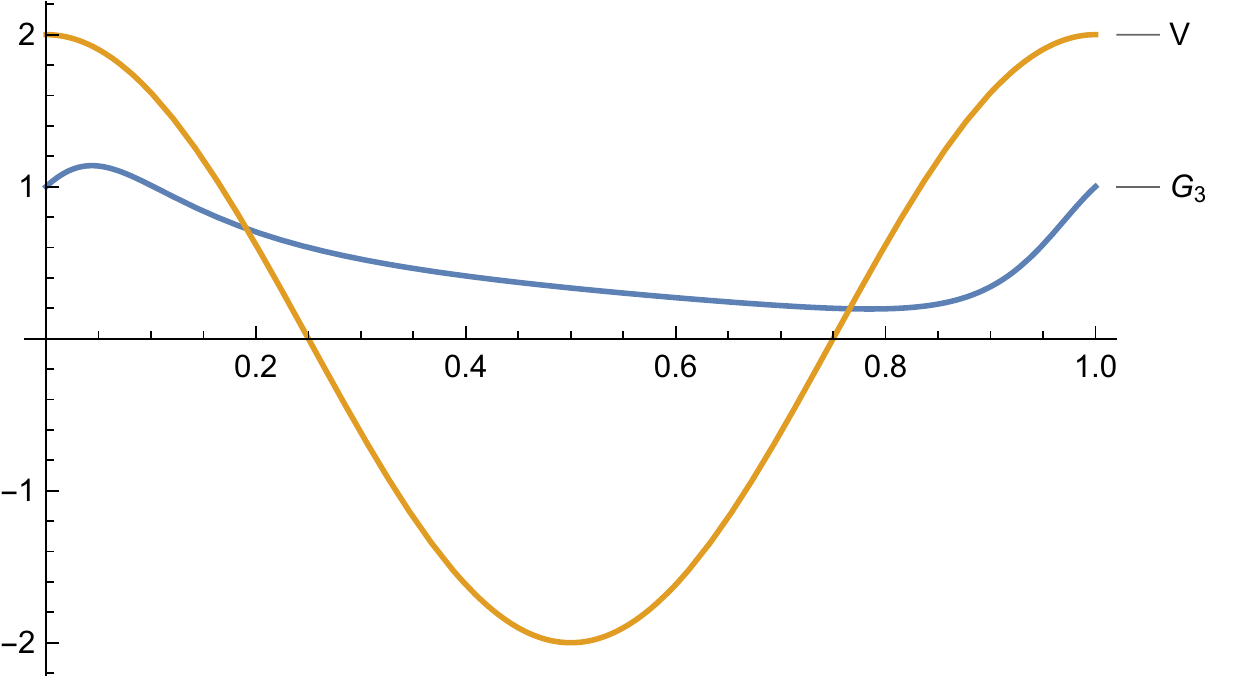}
\caption{The kernel $G_3$ and the potential $V$.}
\label{fig:G3andV}
\end{figure}

\begin{figure}
\includegraphics[width=10cm]{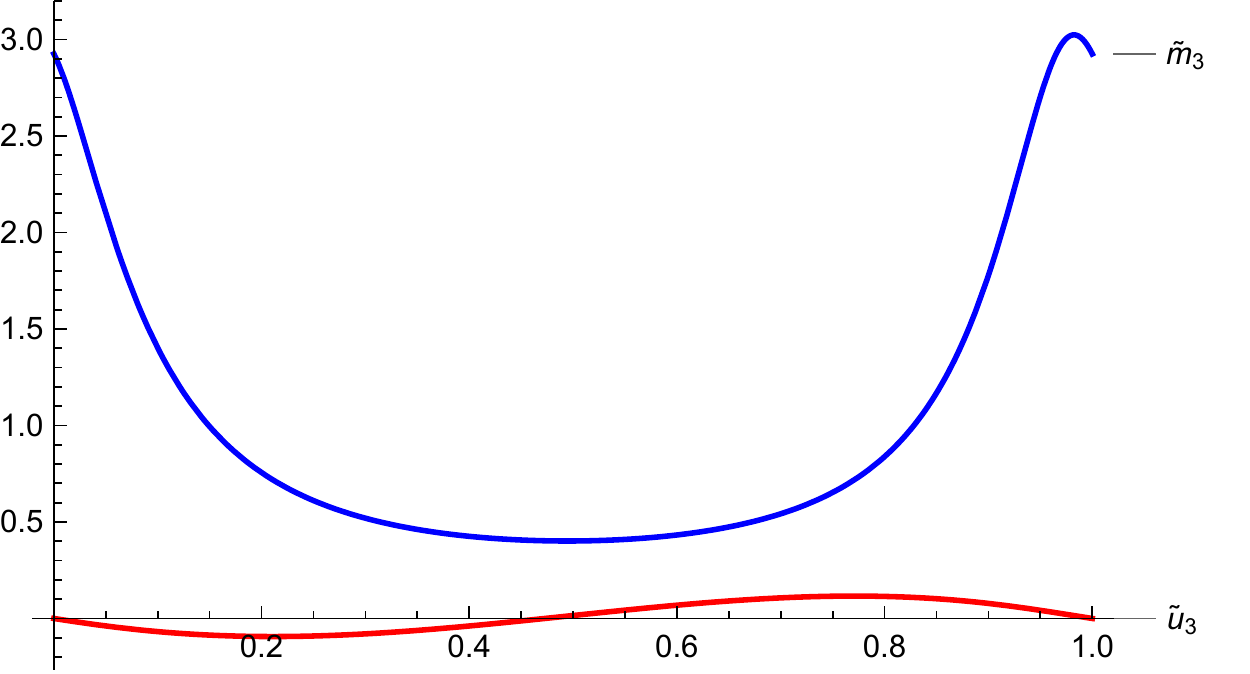}
\caption{The approximate solutions $\widetilde{m}_3$ and $\widetilde{u}_3$.}
\label{fig:u3andm3}
\end{figure}

\begin{figure}
\includegraphics[width=10cm]{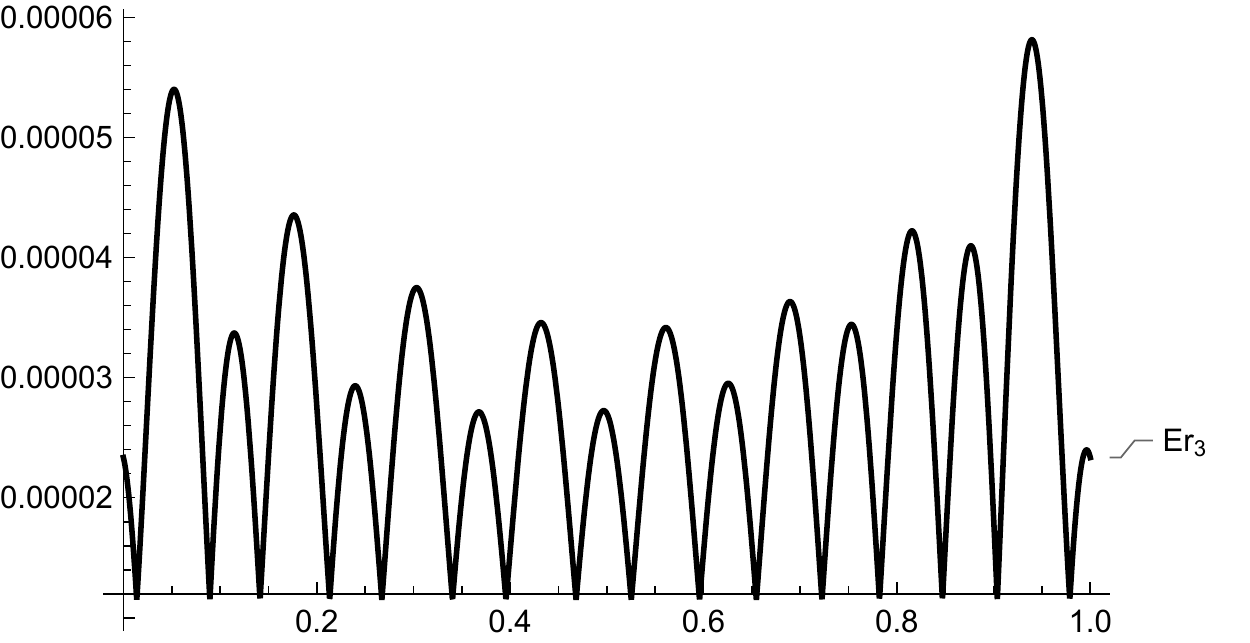}
\caption{The error $\mathrm{Er}_3$.}
\label{fig:Er3}
\end{figure}

\section{Extensions}\label{sec:extensions}

Here, we discuss how our methods can be applied to other one-dimensional MFG system such as \eqref{eq: main extension}. Denote by $L:\Tt\times \Rr\times\Rr^+ \to \Rr,\ (x,v,m)\mapsto L(x,v,m)$, be the Legendre transform of $H$; that is,
\begin{equation*}
	L(x,v,m)=\sup\limits_{p \in \Rr}\left(vp-H(x,p,m)\right).
\end{equation*}
Then, if $H$ satisfies suitable conditions, we have that
\begin{equation}\label{eq: Young}
	L(x,v,m)+H(x,p,m)\geq vp,
\end{equation}
for all $v,p \in \Rr$ and there is equality in \eqref{eq: Young} if and only if 
\begin{equation}\label{eq: Legendre transform}
	v=H'_p(x,p,m)\quad\text{or}\quad p=L'_v(x,v,m).
\end{equation}
As before, second equation in \eqref{eq: main extension} yields
\begin{equation*}
	H'_p(x,u_x,m)=\frac{j}{m},
\end{equation*}
for some constant $j$. Therefore, using \eqref{eq: Legendre transform} we find
\begin{equation*}
	u_x=L'_v\left(x,\frac{j}{m},m\right),
\end{equation*}
which we plug-in to the first equation in \eqref{eq: main extension} and obtain the following system
\begin{equation}\label{eq: main extension current}
	\begin{cases}
	H\left(x,L'_v\left(x,\frac{j}{m},m\right),m\right)=\mathcal{F}\left(\int\limits_{\Tt} G(x-y)m(y)dy\right)+\Hh,\\
	m>0,\ \int\limits_{\Tt} m(x)dx=1.
	\end{cases}
\end{equation}
Next, one can attempt to study \eqref{eq: main extension current} first when $G$ is a trigonometric polynomial and then approximate the general case. As before, when $G$ is a trigonometric polynomial the expression
\begin{equation*}
	\int\limits_{\Tt} G(x-y)m(y)dy
\end{equation*}
is always a trigonometric polynomial. Therefore, we have that
\begin{equation}\label{eq: ansatz extension}
H\left(x,L'_v\left(x,\frac{j}{m},m\right),m\right)=\mathcal{F}\left(\sum\limits_{k=0}^{n} a_k^* \cos(2 \pi kx)+b_k^*\sin(2 \pi k x)\right)+\Hh,
\end{equation}
for some $\{a_k^*\},\{b_k^*\} \subset \Rr$. Suppose $H$ is such that the left-hand-side expression of \eqref{eq: ansatz extension} is invertible in $m$ with inverse $A_j(x,m)$. Then, \eqref{eq: ansatz extension} yields the following ansatz
\begin{equation}\label{eq: ansatz m extension}
	m(x)=A_j\left(x,\mathcal{F}\left(\sum\limits_{k=0}^{n} a_k^* \cos(2 \pi kx)+b_k^*\sin(2 \pi k x)\right)+\Hh\right).
\end{equation}
Thus, one can search for the solution $m$ of \eqref{eq: main extension current} in the form \eqref{eq: ansatz m extension} with undetermined coefficients $\{a_k^*\},\{b_k^*\},\Hh$. Therefore, by plugging \eqref{eq: ansatz m extension} in \eqref{eq: main extension current} we obtain a finite-dimensional fixed point problem for $\{a_k^*\},\{b_k^*\},\Hh$. If this fixed point problem has good structural properties (such as \eqref{eq: system main for original}) for a concrete model of the form \eqref{eq: main extension}, one may analyze this model by methods developed here.


\def\polhk#1{\setbox0=\hbox{#1}{\ooalign{\hidewidth
			\lower1.5ex\hbox{`}\hidewidth\crcr\unhbox0}}} \def\cprime{$'$}

\end{document}